\documentclass[a4paper,11pt]{article}
\usepackage[T1]{fontenc}
\usepackage[latin1]{inputenc}
\usepackage{amssymb}
\usepackage[english]{babel}
\usepackage{amsthm}
\usepackage{amsmath}
\usepackage{amsfonts}
\usepackage{enumerate}

\usepackage{array,lmodern,graphicx}

\def\Re{\text{Re\ }}

\def\R{\mathbb{R}}
\def\C{\mathbb{C}}

\def\fra{\mathfrak{a}}

\newtheorem{lemma}{Lemma}[section]
\newtheorem{corollary}[lemma]{Corollary}
\newtheorem{proposition}[lemma]{Proposition}
\newtheorem{theorem}[lemma]{Theorem}
\newtheorem{definition}[lemma]{Definition}
\newtheorem{remark}[lemma]{Remark}
\numberwithin{equation}{section}

\newcommand{\Hi}{\mathcal{H}}
\newcommand{\V}{\mathcal{V}}
\newcommand{\A}{\mathcal{A}}

\begin{document}

\title{ Lions' maximal regularity problem with  $H^{\frac{1}{2}}$-regularity in time}
\author{Mahdi Achache, El Maati Ouhabaz \thanks{\noindent Univ. Bordeaux, Institut de Math\'ematiques (IMB). CNRS UMR 5251. 351,  
Cours de la Lib\'eration 33405 Talence, France.
 Mahdi.Achache@math.u-bordeaux.fr, Elmaati.Ouhabaz@math.u-bordeaux.fr}}
\date{}

\maketitle
\begin{abstract}
We consider the problem of  maximal regularity  for non-autonomous Cauchy problems
 \begin{equation*}
 \left\{
  \begin{array}{rcl}
     u'(t) + A(t)\,u(t) &=& f(t), \ t \in (0, \tau] \\
     u(0)&=&u_0.
  \end{array}
\right.
\end{equation*}
The time dependent operators $A(t)$ are associated with  (time dependent) sesquilinear forms on a Hilbert space $\Hi$.  We are interested in J.L. Lions's problem concerning maximal regularity of such equations. We give a positive answer  to this problem under minimal regularity assumptions on the forms. Our main assumption is that the forms are  piecewise $H^{\frac{1}{2}}$  with respect to the variable $t$. This  regularity assumption is optimal and our results are the most general ones on this problem. 
\vspace{.7cm}

\noindent \textbf{keywords:} Maximal regularity, non-autonomous evolution equations, Sobolev regularity\\
\textbf{Mathematics Subject Classification (2010):} 35K90, 35K45, 47D06.
\end{abstract}

\section{Introduction}\label{Sec1}
 Let 
 $( \mathcal{ H}, (\cdot, \cdot ),\| \cdot \|)$ be a separable Hilbert space over $\R$ or $\C$. We consider another separable  Hilbert space $\mathcal{V}$ which is densely and continuously embedded into $\mathcal{H}$. We denote by $\mathcal{V}'$ the (anti-) dual space of $\mathcal{V}$ so that 
$$\mathcal{ V} \hookrightarrow_{d} \mathcal{ H} \hookrightarrow_{d}  {\mathcal{V}}'.$$
Hence there exists a constant $C >0$ such that 
$$ \| u \| \leq C \| u \|_{\mathcal{V}} \     \ (u \in \mathcal{V}),$$
where $\| \cdot \|_{\mathcal{V}}$ denotes the norm of $\mathcal{V}$. Similarly, 
$$ \| \psi \|_{\mathcal{V}'}  \leq C \| \psi  \| \     \ ( \psi  \in \mathcal{H}).$$
We  denote by $\langle , \rangle$ the duality $\mathcal{V}'$-$\mathcal{V}$ and note that 
$\langle \psi, v \rangle = (\psi, v)$ if $\psi \in \mathcal{H}$ and $v \in \V$.  \\
We consider a family of sesquilinear forms $$   \fra :[0,\tau]\times \mathcal{V} \times  \mathcal{V}  \rightarrow  \C. $$
We assume throughout this paper the following usual properties. 
\begin{itemize}
 \item{} [H1]: $D(\fra(t))= \mathcal{V}$ (constant form domain),
 \item{}[H2]: $ | \fra(t,u,v)|\leq M \| u \|_\mathcal{ V} \| v \|_\mathcal{ V}$  for $t \in [0, \tau], u, v \in \V$ and some constant $M > 0$ (uniform boundedness),
\item{}[H3]: $\Re \fra(t,u,u)+\nu \| u \|^2\geq \delta  \| u \|_\mathcal{ V}^2 $ for $u \in {\mathcal V}$ and  some $\delta > 0$ \text{and  } $\nu \in  \R $ (uniform quasi-coercivity).
\end{itemize}
We denote by $A(t),\mathcal{ A}(t)$ the usual associated operators with  $\fra(t)$ as operators on $\mathcal{H}$ and ${\mathcal{V}}'$, respectively. In particular, $\mathcal{A}(t) : \mathcal{V} \to \mathcal{V}'$ as a bounded operator and 
$$\fra(t,u,v) = \langle \mathcal{A}(t) u, v \rangle \ {\rm for\ all\ } u, v \in \mathcal{V}.$$
The operator $A(t)$ is the part of $\mathcal{A}(t)$ on $\mathcal{H}$. 

We consider the non-homogeneous Cauchy problem
\begin{equation}\label{eq:evol-eq} \tag{P}
\left\{
  \begin{array}{rcl}
     u'(t) + A(t)\,u(t) &=& f(t), \ t \in (0, \tau] \\
     u(0)&=&u_0. 
  \end{array}
\right.
\end{equation}
\begin{definition}\label{def1.1}
The Cauchy problem (P) has maximal $L^2$-regularity in $\Hi$ if for every $f \in L^2(0, \tau; \Hi)$, there exists a unique 
$u \in H^1(0, \tau; \Hi)$ with $u(t) \in D(A(t))$ for a.e. $t \in [0, \tau]$ and $u$ is a solution of (P) in the $L^2$-sense.
\end{definition}

By a very well known result of J.L. Lions, maximal $L^2$-regularity always holds in the space $\V'$. That is,  for every $f \in L^{2}(0,\tau;\V')$ and $u_{0} \in \Hi$ there exists a unique $u \in H^{1}(0,\tau; \V')\cap L^{2}(0,\tau; \V) $ which  solves the equation 

\begin{equation}\label{eq:evol-eq} \tag{P'}
\left\{
  \begin{array}{rcl}
     u'(t) + \mathcal{A}(t)\,u(t) &=& f(t), \ t \in (0, \tau] \\
     u(0)&=&u_0.
  \end{array}
\right.
\end{equation}

In applications one needs however  maximal regularity in $\Hi$ (for example for elliptic boundary value problems one has to work on $\Hi$ rather than $\V'$ in order to identify the boundary conditions). Maximal regularity in $\Hi$ differs considerably from the same property in $\V'$. Before we recall known results and explain our main contribution in this paper, we recall that one of the reasons why maximal regularity (both in the autonomous and non-autonomous cases) was  intensively studied is due to the fact that it is a very useful tool to prove existence results for non-linear evolution equations. 

For symmetric forms $\fra(t)$, Lions \cite{Lions:book-PDE} (IV Sec. 6, Th\'eor\`eme 6.1]) proved  that if $t \mapsto \fra(t,u,v) $ is $C^1$ and $u_0 = 0$, then maximal $L^2$-regularity in $\Hi$ is satisfied. For general $u_0 \in D(A(0))$, Lions imposes the stronger regularity property  that $t \mapsto \fra(t,u,v) $ is $C^2$. 
Bardos \cite{Bar} improves the latter result for forms satisfying the uniform Kato square root property (see Definition \ref{def2.4} below) by assuming 
that $A(.)^{\frac{1}{2}}$ 
is continuously differentiable with values in $\mathcal{L}(\V, \V')$ and $u_0 \in \V$. Ouhabaz and Spina \cite{OS} proved maximal regularity in $\Hi$ if
$t \mapsto \fra(t,u,v) $ is $C^\alpha$ for some $\alpha > \frac{1}{2}$ when  $u_0 = 0$. This result was extended in Haak and Ouhabaz \cite{HO15} who prove maximal $L^p$-regularity under a slightly  better regularity condition and allowing  $u_0 \in D(A(0)^{\frac{1}{2}})$.  Dier \cite{Di} proved maximal 
$L^2$-regularity for symmetric forms such that $t \mapsto \fra(t,u,v)$ is of bounded variations.  Fackler \cite{Fac} proved that the order $\alpha > \frac{1}{2}$
in \cite{OS} or \cite{HO15}  is optimal in the sense that there exist $\fra(.)$ symmetric and $C^{\frac{1}{2}}$ for which maximal regularity in $\Hi$ fails. A counter-example already appeared in Dier \cite{Di} and it is based on a form which does not satisfy the Kato square root property. 
Dier and Zacher \cite{DZ} proved that if  $t \mapsto  \mathcal{A}(t)$ is in the fractional Sobolev space $H^{\frac{1}{2} + \delta}(0, \tau; \mathcal{L}(\V, \V'))$ for some $\delta > 0$ then maximal $L^2$-regularity in $\Hi$  holds. For a Banach space version of this result, see Fackler \cite{Fac2}. \\
The example in \cite{Fac} is not a differential operator. For elliptic operators in divergence form on $\R^n$, Auscher and Egert \cite{AE} proved maximal regularity if the coefficients satisfy a certain BMO-$H^{\frac{1}{2}}$ condition. The example from \cite{Fac} also shows that $ \mathcal{A}(.) \in W^{\frac{1}{2}, p}(0, \tau; \mathcal{L}(\V, \V'))$ for $p > 2$ is not enough to obtain maximal regularity. The example in \cite{Di} shows that  $\mathcal{A}(.) \in W^{\frac{1}{2}, p}(0, \tau; \mathcal{L}(\V, \V'))$ for $p < 2$ does not imply maximal regularity, at least for form which does not satisfy Kato's square root property. For a discussion on these negative results, see the review paper of Arendt, Dier and Fackler \cite{ADF}. As pointed in \cite{ADF}, the remaining problem is the case of fractional  regularity $H^{\frac{1}{2}}$. We solve this  problem  in the present  paper. Our main result shows that for forms satisfying the uniform Kato square root property and an integrability condition (see \eqref{eqHyp} below), if $t \mapsto  \mathcal{A}(t)$ 
is piecewise in the Sobolev space $H^{\frac{1}{2}}(0, \tau; \mathcal{L}(\V, \V'))$ then maximal $L^2$-regularity in $\Hi$ is satisfied. The initial data $u_0$ is arbitrary in $\V$. This result is optimal. The required Soblev regularity cannot be smaller than $\frac{1}{2}$ since $C^{\frac{1}{2}} \subset H^\alpha$ for 
$\alpha < \frac{1}{2}$. In the case where $\mathcal{A}(t) - \mathcal{A}(s)$ maps into the dual  space of  $[H, \V]_\gamma$ we allow the fractional Sobolev regularity to be $\frac{\gamma}{2}$. This  extends related results in Ouhabaz \cite{Ou15} and Arendt and Monniaux \cite{AM}. 

We give  the precise statements of the main results in the next section. In Sections \ref{sec2} and \ref{sec3} we prove several key estimates and develop the necessary  tools for the proofs of the main results. Some of these tools are quadratic estimates and $L^\infty(0, \tau; \V)$-estimates for the solution of the Cauchy problem. The main results are proved in Section \ref{sec4} and several examples are given in Section \ref{sec5}.

\section{Main results}\label{sec-main}

In this section we state explicitly our main results.  
For clarity of exposition we  consider separately the cases $\gamma = 1$, $\gamma \in (0, 1)$ and $\gamma = 0$.  

We start by  recalling  the definition of vector-valued fractional Sobolev spaces. 

\begin{definition}\label{defSob}
Let X be a Banach space, $\alpha \in (0,1)$ and $I$ an open subset of $\R$.   A function $f \in L^2(I; X)$ is  in the  fractional Sobolev space 
$H^{\alpha}(I;X)$  if
$$\|f\|^{2}_{H^{\alpha}(I;X)} := \|f\|^{2}_{L^{2}(I;X)} + \int_{I \times I} \frac{\|f(t)-f(s)\|_{X}^{2}}{|t-s|^{2\alpha+1}} \, ds dt < \infty.$$
We say that $f$ is in the homogeneous Sobolev space $\dot{H}^\alpha(I;X)$ if 
$$\|f\|^{2}_{\dot{H}^{\alpha}(I;X)} := \int_{I \times I} \frac{\|f(t)-f(s)\|_{X}^{2}}{|t-s|^{2\alpha+1}} \, ds dt < \infty.$$
\end{definition}

We shall say that  $f$ is piecewise in $H^{\alpha}(I;X)$ (resp. $\dot{H}^{\alpha}(I;X)$) if there exists $t_0 < t_1 <...< t_n $ such that $I = \cup_i [t_i, t_{i+1}]$ and  the restriction of 
$f$ to each sub-interval $(t_i, t_{i+1})$ is in $H^{\alpha}(t_i, t_{i+1} ; X)$ (resp. $\dot{H}^{\alpha}(t_i, t_{i+1} ; X)$). 

\bigskip
Let $\fra(t) : \V \times \V \to \C$ for $ 0 \le t \le \tau$ be a family of forms satisfying [H1]-[H3] and let $A(t)$ and $\mathcal{A}(t)$ be the associated operators on $\Hi$ and $\V'$, respectively.  We shall need the following property.
\begin{align}
& {\rm Given}\   \varepsilon > 0,\  {\rm there\ exists}\   \tau_0 = 0 < \tau_1 < ... < \tau_n = \tau \ {\rm  such\ that} \nonumber \\
&\sup_{t \in (\tau_{i-1}, \tau_i)}  \int_{\tau_{i-1}}^{\tau_i}   \frac{\| \mathcal{A}(t) - \mathcal{A}(s) \|^2_{\mathcal{L}(\V, \V')}}{ |t-s|} \, ds < \varepsilon. \label{eqHyp}
\end{align}

Note that this assumption is satisfied in many cases. Suppose for example that  $t \mapsto \fra(t,u, v)$ is $C^\alpha$ for some $\alpha > 0$ in the sense that 
\begin{equation}\label{C-alpha}
| \fra(t,u,v) - \fra(s,u,v) | \le M |t-s|^\alpha \|u \|_\V \| v \|_\V 
\end{equation} 
for some positive constant $M$ and all $u, v \in \V$. Then clearly 
\[
\| \mathcal{A}(t) - \mathcal{A}(s) \|_{\mathcal{L}(\V, \V')} \le M |t-s|^\alpha
\]
and this implies \eqref{eqHyp}. More generally, if $\omega_i$ denotes the modulus of continuity of $\mathcal{A}$ on the interval
$(\tau_{i-1}, \tau_i)$ then \eqref{eqHyp} is satisfied if 
\begin{equation}\label{eqmod}
\int_{ |r| \le \tau_i - \tau_{i-1} } \frac{\omega_i(r)^2}{r} \, dr < \varepsilon.
\end{equation}

\begin{theorem}\label{thm1}
Suppose that [H1]-[H3] and the  uniform Kato square  property are satisfied. If $t \mapsto \mathcal{A}(t)$ is piecewise in  $H^{\frac{1}{2}}(0, \tau; \mathcal{L}(\V, \V'))$ and satisfies \eqref{eqHyp}  then (P) has maximal $L^2$-regularity in $\Hi$ for all $u_0 \in \V$.  In addition, there exists a positive constant $C$ independent of $u_0$ and $f$ such that 
\begin{equation}\label{eq-apriori}
\| A(.) u(.) \|_{L^2(0, \tau; \Hi)} + \| u \|_{H^1(0, \tau ; \Hi)} \le C \left[ \| u_0 \|_{\V} + \| f \|_{L^2(0, \tau; \Hi)} \right].
\end{equation}
\end{theorem}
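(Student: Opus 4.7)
The plan is to reduce the global statement to a \emph{local} one on each sub-interval $(\tau_{i-1},\tau_i)$ supplied by \eqref{eqHyp}, and then to glue the pieces together. Note that uniqueness of $u$ already follows from Lions' theorem applied in the larger space $\V'$, so we only have to prove existence together with the a priori estimate \eqref{eq-apriori}. Starting on the first sub-interval with initial value $u_0\in\V$, once we have maximal regularity on $(0,\tau_1)$ the resulting solution satisfies $u\in H^1(0,\tau_1;\Hi)$ with $A(\cdot)u(\cdot)\in L^2(0,\tau_1;\Hi)$; since by Kato the norm $\|v\|_\V$ is equivalent to $\|A(\tau_1)^{1/2}v\|_\Hi$, an interpolation/trace argument shows $u(\tau_1)\in\V$, and we restart the procedure on $(\tau_1,\tau_2)$. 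Thus the whole problem is reduced to the case where $[0,\tau]$ itself is the single piece, the Sobolev regularity $\mathcal{A}(\cdot)\in H^{1/2}(0,\tau;\mathcal{L}(\V,\V'))$ is global, and $\varepsilon$ in \eqref{eqHyp} can be chosen as small as needed.

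On such a sub-interval, start from the Lions solution $u\in H^1(0,\tau;\V')\cap L^2(0,\tau;\V)$ of (P'). The heart of the argument is to upgrade this to $u'\in L^2(0,\tau;\Hi)$ and $A(\cdot)u(\cdot)\in L^2(0,\tau;\Hi)$. First I would invoke the $L^\infty(0,\tau;\V)$-bound for $u$ that is announced in Sections \ref{sec2}--\ref{sec3}; combined with the Kato square root property this yields $t\mapsto A(t)^{1/2}u(t)\in L^\infty(0,\tau;\Hi)$ with a bound in terms of $\|u_0\|_\V$ and $\|f\|_{L^2(\Hi)}$. Next, rewrite the equation in the frozen-coefficient form
\begin{equation*}
u'(t)+A(s)u(t)=f(t)+\bigl(\mathcal{A}(s)-\mathcal{A}(t)\bigr)u(t),\qquad 0<s<t<\tau,
\end{equation*}
and apply the autonomous maximal $L^2$-regularity of $A(s)$ (which holds because $A(s)$ is sectorial on $\Hi$ with Kato square root). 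The critical task is to estimate the non-autonomous perturbation in $L^2(0,\tau;\Hi)$ by factoring
\begin{equation*}
\bigl(\mathcal{A}(t)-\mathcal{A}(s)\bigr)u(s)=\bigl(\mathcal{A}(t)-\mathcal{A}(s)\bigr)A(s)^{-1/2}\,A(s)^{1/2}u(s),
\end{equation*}
where $A(s)^{-1/2}:\Hi\to\V$ is uniformly bounded by Kato and $\mathcal{A}(t)-\mathcal{A}(s):\V\to\V'$ carries the $H^{1/2}$-in-time regularity. Combined with the quadratic estimates established earlier for $t\mapsto A(t)^{1/2}u(t)$, this reduces matters to controlling the double integral in the Sobolev seminorm of Definition \ref{defSob}.

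The main obstacle, and the reason the exponent $1/2$ is borderline, is that the natural singular integral arising from the $H^{1/2}$ difference quotient sits exactly at the endpoint where the usual Hardy--Littlewood/Schur type bounds diverge logarithmically. This is precisely the role of the hypothesis \eqref{eqHyp}: the supremum-in-$t$ of the Dini-type integral of $\|\mathcal{A}(t)-\mathcal{A}(s)\|^2_{\mathcal{L}(\V,\V')}/|t-s|$ provides the missing endpoint control, and the resulting constant is $O(\varepsilon)$. Consequently the perturbation term satisfies
\begin{equation*}
\Bigl\|\int_0^{\cdot} e^{-(\cdot-s)A(s)}\bigl(\mathcal{A}(\cdot)-\mathcal{A}(s)\bigr)u(s)\,ds\Bigr\|_{L^2(0,\tau;\Hi)}\le C\sqrt{\varepsilon}\,\bigl(\|A(\cdot)u(\cdot)\|_{L^2(\Hi)}+\|u_0\|_\V+\|f\|_{L^2(\Hi)}\bigr),
\end{equation*}
and choosing $\varepsilon$ small enough lets us absorb the $\|A(\cdot)u(\cdot)\|_{L^2(\Hi)}$-term into the left-hand side. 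This closes the a priori estimate \eqref{eq-apriori} on the single piece; iterating across the finitely many $\tau_i$'s yields the global result.
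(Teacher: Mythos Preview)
Your global scaffolding (localize to one piece, invoke the $L^\infty(0,\tau;\V)$-bound, then glue) matches the paper, and so does the idea that the quadratic estimates and the uniform Kato property are the analytic engines. The gap is in the core step.

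The ``frozen-coefficient plus autonomous maximal regularity plus absorption'' scheme you describe cannot run as written. If you freeze at a single $s$ and write $u'(t)+A(s)u(t)=f(t)+(\mathcal{A}(s)-\mathcal{A}(t))u(t)$, the forcing term on the right is only in $L^2(0,\tau;\V')$, because $\mathcal{A}(s)-\mathcal{A}(t):\V\to\V'$ and nothing better; your factorisation $(\mathcal{A}(t)-\mathcal{A}(s))A(s)^{-1/2}\,A(s)^{1/2}u(s)$ still lands in $\V'$. Autonomous maximal $L^2$-regularity in $\Hi$ therefore does not apply, and you recover only Lions' $\V'$-regularity. Moreover, the integral you display carries the semigroup $e^{-(t-s)A(s)}$, which does not arise from any representation formula you have derived; the formula that one actually obtains (by differentiating $s\mapsto e^{-(t-s)A(t)}u(s)$) has $A(t)$, not $A(s)$, in the exponent.

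What the paper does instead is exactly this $A(t)$-frozen representation: $A(t)u(t)=Ru_0(t)+Su(t)+Lf(t)$ with
\[
Su(t)=A(t)\int_0^t e^{-(t-s)A(t)}(\mathcal{A}(t)-\mathcal{A}(s))u(s)\,ds,
\]
and then bounds $\|Su\|_{L^2(\Hi)}\le C\,\|\mathcal{A}\|_{\dot H^{1/2}}\,\|u\|_{L^\infty(\V)}$ \emph{directly}, with no absorption of $\|A(\cdot)u(\cdot)\|_{L^2}$ at all. The two hypotheses play separate roles, which your sketch conflates: condition \eqref{eqHyp} (the Dini-type sup of single integrals) is used only to obtain the $L^\infty(0,\tau;\V)$-bound on $u$ and the boundedness of $R$ and $L$; the genuine $H^{1/2}$ double integral is what controls $S$. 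The smallness parameter $\varepsilon$ in \eqref{eqHyp} enters solely to make $I-S_0$ invertible in the proof of the $L^\infty(\V)$-estimate, not to absorb anything in the final inequality.
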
 

We refer to the next  section for the definition of the uniform Kato square root property and for few more details on such property.

We have the following corollary which recovers the maximal regularity result proved  \cite{DZ} under the assumption that $t \mapsto \mathcal{A}(t)$ is  in  $H^{\frac{1}{2} + \delta}(0, \tau; \mathcal{L}(\V, \V'))$. 
\begin{corollary}\label{corDZ}
 Suppose that [H1]-[H3] and the  uniform Kato square  property are satisfied. Suppose that  $t \mapsto \mathcal{A}(t)$ is piecewise in  $H^{\frac{1}{2} + \delta}(0, \tau; \mathcal{L}(\V, \V'))$ for some $\delta > 0$.   Then (P) has maximal $L^2$-regularity in $\Hi$ for all $u_0 \in \V$.  In addition, there exists a positive constant $C$ independent of $u_0$ and $f$ such that 
\begin{equation}\label{eq-apriori}
\| A(.) u(.) \|_{L^2(0, \tau; \Hi)} + \| u \|_{H^1(0, \tau ; \Hi)} \le C \left[ \| u_0 \|_{\V} + \| f \|_{L^2(0, \tau; \Hi)} \right].
\end{equation}
\end{corollary}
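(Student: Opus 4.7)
The plan is to deduce Corollary \ref{corDZ} directly from Theorem \ref{thm1}. First I observe that on any bounded interval $H^{\frac{1}{2}+\delta}(J;X)$ embeds continuously into $H^{\frac{1}{2}}(J;X)$, so the piecewise $H^{\frac{1}{2}}$ assumption of Theorem \ref{thm1} is automatic. Everything therefore reduces to verifying the integrability condition \eqref{eqHyp}, possibly after passing to a finer partition than the one on which $\mathcal{A}$ is piecewise $H^{\frac{1}{2}+\delta}$.

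The key tool will be the vector-valued Sobolev embedding $H^{\frac{1}{2}+\delta}(J;X) \hookrightarrow C^{\delta}(\bar J;X)$, valid for any Banach space $X$; this is proved by a Morrey-type estimate starting directly from the Slobodeckij seminorm in Definition \ref{defSob}, which transfers verbatim from the scalar case by replacing $|\cdot|$ with $\|\cdot\|_X$. Applied to each piece $(t_{i-1},t_i)$ on which $\mathcal{A}$ is $H^{\frac{1}{2}+\delta}$, it yields
\[
\| \mathcal{A}(t) - \mathcal{A}(s) \|_{\mathcal{L}(\V, \V')} \le C_i \, |t-s|^{\delta}, \qquad t, s \in (t_{i-1},t_i),
\]
with a constant $C_i$ controlled by the $H^{\frac{1}{2}+\delta}$-norm of $\mathcal{A}$ on that piece.

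Given $\varepsilon > 0$, the modulus of continuity $\omega_i$ of $\mathcal{A}$ on any sub-piece of length $\le L$ then satisfies $\omega_i(r) \le C_i r^{\delta}$, and hence
\[
\int_{|r|\le L}\frac{\omega_i(r)^2}{|r|}\, dr \le \frac{C_i^{\,2}\, L^{2\delta}}{\delta},
\]
which can be made smaller than $\varepsilon$ uniformly in $i$ by choosing $L$ sufficiently small. I then refine the original partition by cutting each piece into sub-intervals of length at most $L$; by the remark following Theorem \ref{thm1} (formula \eqref{eqmod}), the resulting partition satisfies \eqref{eqHyp}. An application of Theorem \ref{thm1} delivers maximal $L^2$-regularity in $\Hi$ together with the a priori estimate \eqref{eq-apriori}. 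The only point requiring any care is the vector-valued Sobolev embedding into a Hölder space, but this is entirely standard; the rest is bookkeeping on partition endpoints.
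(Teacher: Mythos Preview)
Your proof is correct and follows essentially the same route as the paper: invoke the vector-valued Sobolev embedding $H^{\frac{1}{2}+\delta}\hookrightarrow C^{\delta}$ on each piece (the paper cites \cite{Sim}, Corollary 26, for this) to obtain H\"older continuity of $\mathcal{A}$, then use the observation preceding \eqref{eqmod} that any H\"older modulus satisfies \eqref{eqHyp}, and apply Theorem \ref{thm1}. Your explicit partition-refinement argument and the bound $\int_{|r|\le L}\omega_i(r)^2/|r|\,dr \le C_i^2 L^{2\delta}/\delta$ merely spell out what the paper leaves implicit in the phrase ``as explained above this implies condition \eqref{eqHyp}''.
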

\begin{proof} It follows from \cite{Sim}, Corollary 26 that $H^{\frac{1}{2} + \delta}(\tau_{i-1}, \tau_i; \mathcal{L}(\V, \V'))$ is continuously embedded into $C^{\delta}( \tau_{i-1}, \tau_i; \mathcal{L}(\V, \V')) $. As explained above this implies condition \eqref{eqHyp}. We then apply Theorem \ref{thm1}. \end{proof} 

Let $\gamma \in [0, 1]$ and  $\V_{\gamma} := [\Hi,\V]_{\gamma}$ be the usual complex interpolation space between $\Hi$ and $\V$. We denote by $\V_\gamma' := (\V_\gamma)'$ its (anti-) dual.  In some situations, one may have  $\mathcal{A}(t) -\mathcal{A}(s) : \V \to \V_\gamma'$ for some $\gamma \in [0, 1)$ (see Section \ref{sec2} for some additional details).  For example, this happens for forms $\fra(t)$ associated with differential operators such that  the difference  $\fra(t)- \fra(s)$ has only terms of smaller order or boundary terms.   In this case the required regularity in the previous theorem can be improved. Before we state the results we introduce the following assumption
\begin{itemize}
 \item{} [H4]:  $\| \mathcal{A}(t) - \mathcal{A}(s) \|_{\mathcal{L}(\V, \V_\gamma')} \le M_\gamma$
 \end{itemize}
 for some positive constant $M_\gamma$ and all $t, s \in [0, \tau]$. 

\begin{theorem}\label{thm2}
Suppose [H1]-[H3] and  that $D(A(t_0)^{\frac{1}{2}}) = \V$ for some $t_0 \in [0, \tau]$. Let $\gamma \in (0, 1)$ and suppose [H4].  If  $t \mapsto \mathcal{A}(t)$ is   piecewise in the homogeneous Sobolev space $\dot{H}^{\frac{\gamma}{2}}(0, \tau; \mathcal{L}(\V, \V_\gamma'))$,  then (P) has maximal $L^2$-regularity in $\Hi$ for all $u_0 \in \V$. In addition, there exists a positive constant $C$ independent of $u_0$ and $f$ such that 
\begin{equation}\label{eq-apriori-1}
\| A(.) u(.) \|_{L^2(0, \tau; \Hi)} + \| u \|_{H^1(0, \tau ; \Hi)} \le C \left[ \| u_0 \|_{\V} + \| f \|_{L^2(0, \tau; \Hi)} \right].
\end{equation}
\end{theorem}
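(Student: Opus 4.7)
The plan is to follow the approach developed for Theorem \ref{thm1}, while exploiting the finer mapping property [H4] to offset the smaller time regularity $\gamma/2 < 1/2$. The heuristic is that a spatial gain of order $\gamma$ (passing from $\V'$ to $\V_\gamma'$) should trade, via the interpolation $\V_\gamma = [\Hi, \V]_\gamma$, against a loss of order $1-\gamma$ in time regularity, so that the key double integrals reduce to ones of the same form as in Theorem \ref{thm1}.

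First, I would upgrade the pointwise Kato hypothesis at $t_0$ to a uniform one. Under [H4], $\mathcal{A}(t) - \mathcal{A}(t_0)$ is a bounded perturbation from $\V$ into $\V_\gamma' \hookrightarrow \V'$, and a Kato-stability lemma (either quoted from the literature or, more likely, developed in Section \ref{sec2}) then yields $D(A(t)^{1/2}) = \V$ with uniform norm equivalence in $t$. Second, by the piecewise assumption it suffices to work on one sub-interval $I_i = (\tau_{i-1}, \tau_i)$ with $\mathcal{A}(.) \in \dot{H}^{\gamma/2}(I_i; \mathcal{L}(\V, \V_\gamma'))$; the global conclusion follows by iteration, using the a posteriori $\V$-valued continuity of the weak solution at the matching points $\tau_i$.

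On such a sub-interval, I would establish \eqref{eq-apriori-1} through the quadratic estimates of Section \ref{sec3}. The weak solution $u \in H^1(I_i; \V') \cap L^2(I_i; \V)$ is provided by Lions. Testing the equation against an appropriate quantity of the form $\psi = \Phi(A(.)) u'$ (with $\Phi$ a suitable fractional power, as in the proof of Theorem \ref{thm1}) reduces matters to controlling a cross term of the type
\begin{equation*}
\iint_{I_i \times I_i} \frac{\bigl|\langle (\mathcal{A}(t) - \mathcal{A}(s)) u(s), \psi(t) \rangle\bigr|}{|t-s|} \, ds \, dt.
\end{equation*}
The pairing is now through the $\V_\gamma$-$\V_\gamma'$ duality, and the interpolation $\|\psi(t)\|_{\V_\gamma} \le C \|\psi(t)\|^{1-\gamma} \|\psi(t)\|_\V^\gamma$ is combined with Cauchy-Schwarz against the weight $|t-s|^{-(1+\gamma)/2}$. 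The resulting factors are exactly $\|\mathcal{A}(.)\|_{\dot{H}^{\gamma/2}(I_i; \mathcal{L}(\V, \V_\gamma'))}$ on one side and a controlled bilinear expression in $\|u(s)\|_\V$ and the interpolated norm of $\psi(t)$ on the other.

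The hard part will be making the two exponents $\gamma$ (spatial interpolation) and $\gamma/2$ (time regularity) balance so that the residual contribution of $\|A(.) u(.)\|_{L^2(I_i; \Hi)}$ on the right-hand side can be absorbed into the left, using the $L^\infty(I_i; \V)$ bound on $u$ obtained along the way. Note that condition \eqref{eqHyp} is not required in this statement: for $\gamma > 0$ the homogeneous $\dot{H}^{\gamma/2}$ norm over a sub-interval tends to zero with its length (by absolute continuity of the defining double integral), so any smallness needed to close the argument can be arranged by further subdividing each piece.
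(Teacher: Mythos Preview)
Your peripheral observations are correct and match the paper: the uniform Kato property follows from the single-point hypothesis via Proposition~\ref{prop2.3} under [H4]; the piecewise reduction with gluing at the endpoints $\tau_i$ works because Proposition~\ref{propV} gives $u_i(\tau_i)\in\V$; and condition \eqref{eqHyp} is indeed not needed for $\gamma\in(0,1)$, since [H4] alone makes the relevant integrals finite and small on short intervals.

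However, the core mechanism you propose is not the paper's, and as written it has a gap. The paper does \emph{not} use an energy/testing argument, neither for Theorem~\ref{thm1} nor here. It is built instead on the frozen-coefficient variation-of-constants formula \eqref{F1},
\[
u(t)=e^{-tA(t)}u_0+\int_0^t e^{-(t-s)\mathcal{A}(t)}(\mathcal{A}(t)-\mathcal{A}(s))u(s)\,ds+\int_0^t e^{-(t-s)A(t)}f(s)\,ds,
\]
so that $A(t)u(t)=Ru_0(t)+Su(t)+Lf(t)$. The operators $R$ and $L$ are bounded into $L^2(0,\tau;\Hi)$ by Lemma~\ref{lem3.33} and Corollary~\ref{cor3.61} (both use [H4] directly when $\gamma<1$). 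The key estimate is for $S$: using the quadratic estimate (Lemma~\ref{lem2.4}) for $A(t)^*$ and the semigroup smoothing $\|e^{-rA(t)}\|_{\mathcal{L}(\V_\gamma',\Hi)}\le C r^{-\gamma/2}$ from Lemma~\ref{lem2.1}, one gets pointwise
\[
\|Su(t)\|^2 \le C\,\|u\|_{L^\infty(0,\tau;\V)}^2 \int_0^t \frac{\|\mathcal{A}(t)-\mathcal{A}(s)\|_{\mathcal{L}(\V,\V_\gamma')}^2}{|t-s|^{1+\gamma}}\,ds,
\]
and integrating in $t$ produces exactly the $\dot H^{\gamma/2}$ seminorm. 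The $L^\infty(0,\tau;\V)$ bound on $u$ is supplied by Proposition~\ref{YA1}, whose proof already exploits [H4] to make $S_0$ a contraction on short intervals. No absorption is needed: $S$ is bounded by data, not by $\|A(.)u(.)\|_{L^2}$.

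The gap in your route is twofold. First, testing $u'+\mathcal{A}(t)u=f$ against any $\psi(t)$ produces single time integrals $\int\langle\mathcal{A}(t)u(t),\psi(t)\rangle\,dt$, not a double integral in $(t,s)$ with kernel $|t-s|^{-1}$; you have not explained what additional manipulation (a fractional-derivative representation? a commutator with a paraproduct?) generates the cross term you display. Second, even granting that term, your absorption is circular: you want to control $\|\psi\|_{L^2(\V_\gamma)}$ with $\psi=\Phi(A(.))u'$, but any nontrivial $\Phi$ forces you to estimate $u'$ in a space at least as strong as $\Hi$, which is precisely the conclusion you are after. The paper avoids this entirely by never putting $u'$ on the right-hand side.
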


Note that if [H4] holds with $\gamma = 0$ then
\[
| \langle \mathcal{A}(t)u - \mathcal{A}(s) u, v \rangle | \le M_0 \| u \|_{\V} \| v \|
\]
for all $u, v \in \V$.  In that case $\mathcal{A}(t) - \mathcal{A}(s)$ defines  a bounded operator from $\V$ into  $\Hi$. This implies in particular that the operators $A(t)$ have the same domain 
$D(A(t)) = D(A(0))$. For operators satisfying the later property  maximal regularity of (P) holds under the  assumption  that $t \mapsto A(t)v$ is relatively continuous for all $v \in D(A(0))$, see \cite{ACFP}, Theorem 3.3. See also \cite{GV} where the later regularity assumption  is replaced  by a certain commutation property. We prove here that maximal regularity holds without requiring any property on  the operators (or the forms). More precisely, we have
\begin{proposition}\label{prop3}
Suppose [H1]-[H3]. Suppose that [H4] holds for $\gamma = 0$ and that $D(A(t_0)^{\frac{1}{2}}) = \V$ for some $t_0 \in [0, \tau]$.  Then (P) has maximal $L^2$-regularity in $\Hi$ for all $u_0 \in \V$. In addition, there exists a positive constant $C$ independent of $u_0$ and $f$ such that 
\begin{equation}\label{eq-apriori-10}
\| A(.) u(.) \|_{L^2(0, \tau; \Hi)} + \| u \|_{H^1(0, \tau ; \Hi)} \le C \left[ \| u_0 \|_{\V} + \| f \|_{L^2(0, \tau; \Hi)} \right].
\end{equation}
\end{proposition}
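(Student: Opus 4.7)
The plan is to treat the difference $B(t) := \mathcal{A}(t) - \mathcal{A}(t_0)$ as a relatively bounded perturbation of the autonomous operator $A(t_0)$ and to reduce (P) to the classical autonomous $L^2$-theory on the Hilbert space $\Hi$. The decisive observation is that [H4] with $\gamma = 0$ says exactly that $B(t)$ extends to an element of $\mathcal{L}(\V, \Hi)$ with norm uniformly controlled by $M_0$. In particular $B(\cdot) w \in L^2(0, \tau; \Hi)$ whenever $w \in L^2(0, \tau; \V)$, and the same bound forces $D(A(t)) = D(A(t_0))$ for every $t$, so Definition \ref{def1.1} amounts to finding $u$ in the autonomous maximal regularity class of $A(t_0)$ that also solves (P).

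First I would invoke Lions's theorem, recalled in Section \ref{Sec1}, with $f \in L^2(0, \tau; \Hi) \hookrightarrow L^2(0, \tau; \V')$ and $u_0 \in \V \hookrightarrow \Hi$, to produce a unique $u \in H^1(0, \tau; \V') \cap L^2(0, \tau; \V)$ solving (P'), together with the standard energy estimate bounding $\|u\|_{L^2(0, \tau; \V)}$ by $\|u_0\| + \|f\|_{L^2(0, \tau; \Hi)}$. This makes $g := f - B(\cdot) u$ a legitimate element of $L^2(0, \tau; \Hi)$. Next, up to the harmless shift by $\nu I$ provided by [H3], the operator $A(t_0)$ is the negative generator of a bounded analytic semigroup on $\Hi$ and hence has autonomous maximal $L^2$-regularity by de Simon's theorem; the trace space of this MR class is $\V$ under the Kato hypothesis at $t_0$. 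Applying this autonomous theory to $v' + A(t_0) v = g$, $v(0) = u_0$ yields $v \in H^1(0, \tau; \Hi) \cap L^2(0, \tau; D(A(t_0)))$ with the corresponding estimate controlled by $\|u_0\|_\V + \|g\|_{L^2(0, \tau; \Hi)}$.

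The final step is to identify $u$ with $v$. Since $\Hi \hookrightarrow \V'$ and $D(A(t_0)) \hookrightarrow \V$, the function $v$ sits inside the Lions class, and a short computation using $\mathcal{A}(\cdot) = \mathcal{A}(t_0) + B(\cdot)$ shows that $w := v - u$ satisfies the autonomous $\V'$-Cauchy problem $w' + \mathcal{A}(t_0) w = 0$ with $w(0) = 0$. Lions's uniqueness forces $v = u$, so $u$ itself belongs to the MR class of $A(t_0)$ and $A(\cdot) u = A(t_0) u + B(\cdot) u$ is in $L^2(0, \tau; \Hi)$. Chaining the two estimates above delivers \eqref{eq-apriori-10}, and uniqueness among MR-solutions is automatic because every such solution lies in the Lions class. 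I do not expect any real obstacle: the only delicate bookkeeping is the switch between the $\Hi$-, $\V$- and $\V'$-formulations in the identification step, and this is settled by the fact that $\mathcal{A}(t_0)$ extends $A(t_0)$. Conceptually, this is the baseline case in which the perturbation $B$ is benign enough that the plain autonomous theory suffices, without the quadratic-estimate machinery needed for $\gamma > 0$.
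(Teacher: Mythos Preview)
Your argument is correct, and it takes a genuinely different (and shorter) route than the paper's.

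The paper proceeds by a contraction mapping: it works in $E := H^1(0,\tau;\Hi)\cap L^\infty(0,\tau;\V)$, sends $v\in E$ to the solution $u$ of $u'+A(0)u=f-B(\cdot)v$, $u(0)=u_0$, and uses the bound $\|B(v_1-v_2)\|_{L^2(0,\tau;\Hi)}\le M_0\sqrt{\tau}\,\|v_1-v_2\|_{L^\infty(0,\tau;\V)}$ to obtain a contraction for small $\tau$, then glues over sub-intervals. Your approach bypasses all of this: because Lions' weak solution already satisfies $u\in L^2(0,\tau;\V)$ with the standard energy bound, [H4] with $\gamma=0$ immediately gives $B(\cdot)u\in L^2(0,\tau;\Hi)$, so $u$ is recognised as the autonomous MR-solution of $v'+A(t_0)v=f-B(\cdot)u$ and inherits $H^1(0,\tau;\Hi)\cap L^2(0,\tau;D(A(t_0)))$ in one stroke on the whole interval. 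The identification $v=u$ via $w'+\mathcal A(t_0)w=0$, $w(0)=0$ is clean and correct. The only thing the paper's route delivers for free that yours does not is the $L^\infty(0,\tau;\V)$ bound (built into $E$), but that is not part of the stated proposition, and in any case follows a posteriori from the embedding $H^1(0,\tau;\Hi)\cap L^2(0,\tau;D(A(t_0)))\hookrightarrow C([0,\tau];\V)$ under the Kato hypothesis at $t_0$.
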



\section{Preparatory lemmas}\label{sec2}

In this section we prove  several estimates  which will play an important  role in the proofs of the main results. We emphasize that one of the important points here  is to prove estimates with constants which are independent of $t$.  

Before we start let us point out that we may assume without loss of generality that assumption [H3] is satisfied with $\nu = 0$, that is the  forms are coercive with constant $\delta > 0$ independent of $t$. The reason is that the maximal regularity  of (P) is equivalent to the same property for  
\begin{equation}\label{CPnu}
v'(t) + (A(t) + \nu) v(t) = g(t), \    v(0) = u_0.
\end{equation}
This can be seen by observing that for $g(t) := f(t) e^{-\nu t}$, then $v(t) = u(t) e^{-\nu t}$ and clearly $v \in H^1(0, \tau; \Hi)$ {\it if and only if } $u \in H^1(0, \tau; \Hi)$ (and obviously
$f \in L^2(0, \tau; \Hi)$ {\it if and only if} $g \in L^2(0, \tau; \Hi)$). \\
When [H3] holds with $\nu = 0$ then clearly the operators $A(t)$ are invertible on $\Hi$. In addition, one has the resolvent estimate
\begin{equation}\label{eq2.0}
\| (\mu + A(t) )^{-1} \|_{\mathcal{L}(\Hi)} \le \frac{C}{ 1 + \mu}
\end{equation}
for all $\mu \ge 0$. The constant is independent of $t \in [0, \tau]$ (see e.g., \cite{ADLO}, Proposition 2.1). 
The same estimate  holds for $\mathcal{A}(t)$ on $\V'$. 

Recall that  $\V_{\gamma}=[\Hi,\V]_{\gamma}$ (for $\gamma \in [0,1]$)  is the complex interpolation space between $\Hi$ and $\V$ and $\V_\gamma' := (\V_\gamma)'$ denotes  its (anti-) dual space. 
\begin{lemma}\label{lem2.1} Suppose that [H1]-[H3] are satisfied with $\nu = 0$. Then there exists a constant $C > 0$ such that the following estimates hold  for all $\mu \ge 0$, $r > 0$ and all $t \in [0, \tau]$.  
\begin{itemize}
    \item[1-] $\|(\mu+ \A(t))^{-1}\|_{\mathcal{L}(\V_{\gamma}',\V)} \le  \frac{C}{(\mu+1)^{\frac{1-\gamma}{2}}},$
    \item[2-] $\|(\mu+ \A(t))^{-1}\|_{\mathcal{L}(\V_{\gamma}',\Hi)} \le  \frac{C}{(\mu+1)^{1-\frac{\gamma}{2}}},$
    \item[3-]$\|e^{- rA(t)}\|_{\mathcal{L}(\V_{\gamma}', \Hi)} \le \frac{C}{ r^{\frac{\gamma}{2}}}.$
\end{itemize}
\end{lemma}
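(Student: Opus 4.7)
The plan is to derive the three estimates from four elementary endpoint resolvent bounds (at $\gamma=0$ and $\gamma=1$) followed by complex interpolation; part~3 additionally requires extending the resolvent estimates to a complex sector and using a Dunford representation of the semigroup. First I would establish the four base bounds from coercivity (with $\nu=0$). Setting $T=(\mu+\mathcal{A}(t))^{-1}$ and $v=Tf$, testing $(\mu+\mathcal{A}(t))v=f$ against $v$ and taking real parts gives the energy inequality
\[
\mu\,\|v\|^{2}+\delta\,\|v\|_{\V}^{2}\;\le\;|\langle f,v\rangle|.
\]
Estimating the right-hand side by Cauchy--Schwarz (either $|\langle f,v\rangle|\le\|f\|_{\V'}\|v\|_{\V}$ or $|\langle f,v\rangle|\le\|f\|\,\|v\|$) and combining with $\|v\|\le C\|v\|_{\V}$ to turn a factor $\mu$ on the left into $1+\mu$ yields $\|T\|_{\V'\to\V}\le C$, $\|T\|_{\V'\to\Hi}\le C(1+\mu)^{-1/2}$, $\|T\|_{\Hi\to\V}\le C(1+\mu)^{-1/2}$ and $\|T\|_{\Hi\to\Hi}\le C(1+\mu)^{-1}$ (the last being exactly \eqref{eq2.0}). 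All constants depend only on $\delta$, $M$, and the embedding constant of $\V\hookrightarrow\Hi$, so they are uniform in $t$.

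Next I would identify $(\V_{\gamma})'$ as a complex interpolation space. Since $\Hi$ is reflexive and $\V$ is dense in $\Hi$, the duality theorem for the complex method gives
\[
\V'_{\gamma}=\bigl([\Hi,\V]_{\gamma}\bigr)'=[\Hi,\V']_{\gamma}=[\V',\Hi]_{1-\gamma}.
\]
Applying complex interpolation to $T$ between the two endpoints $\V'\to\V$ and $\Hi\to\V$ gives part~1 with exponent $(1-\gamma)/2$, while interpolating $\V'\to\Hi$ against $\Hi\to\Hi$ gives part~2 with exponent $1-\gamma/2$.

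For part~3 I would use the Dunford calculus. Hypotheses [H1]--[H3] force $A(t)$ to be sectorial of some half-angle $\omega<\pi/2$ uniformly in $t$, so the four base bounds extend (with $1+\mu$ replaced by $1+|\lambda|$) to complex $\lambda$ lying outside a sector $\Sigma_{\omega'}$ with $\omega<\omega'<\pi/2$. Writing
\[
e^{-rA(t)}=\frac{1}{2\pi i}\int_{\Gamma}e^{-r\lambda}(\lambda-A(t))^{-1}\,d\lambda,
\]
where $\Gamma=\partial\Sigma_{\omega'}$ is oriented around $\sigma(A(t))$, taking $\V'_{\gamma}\to\Hi$ norms, and inserting the interpolated bound $\|(\lambda-\mathcal{A}(t))^{-1}\|_{\V'_{\gamma}\to\Hi}\le C(1+|\lambda|)^{\gamma/2-1}$, reduces the problem to estimating $\int_{0}^{\infty}e^{-rs\cos\omega'}(1+s)^{\gamma/2-1}\,ds$; the substitution $u=rs$ produces the required bound $Cr^{-\gamma/2}$, the exponential handling large $|\lambda|$ and the integrable factor $(1+s)^{\gamma/2-1}$ handling the origin.

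The main obstacle I expect is the last step: one has to choose the opening angle $\omega'$ uniformly in $t$, verify that the interpolation identity $\V'_{\gamma}=[\V',\Hi]_{1-\gamma}$ transfers to the resolvent estimate on the whole contour, and carry out the scaling so that the bound $r^{-\gamma/2}$ is recovered with constants independent of $t$. A minor bookkeeping point is that the paper works with anti-duals, so the duality identity for $(\V_\gamma)'$ must be stated with respect to the anti-dual pairing $\langle\,\cdot\,,\,\cdot\,\rangle$ introduced in Section~\ref{Sec1}.
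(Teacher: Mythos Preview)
Your proposal is correct. Parts~1 and~2 follow exactly the paper's route: derive the four endpoint resolvent bounds from coercivity and then interpolate using $\V_\gamma'=[\V',\Hi]_{1-\gamma}$.

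For part~3 the approaches diverge. The paper does not pass through a Dunford integral; it interpolates directly at the level of the semigroup, invoking the contraction bound $\|e^{-rA(t)}\|_{\mathcal{L}(\Hi)}\le 1$ together with the endpoint estimate $\|e^{-rA(t)}\|_{\mathcal{L}(\V',\Hi)}\le C r^{-1/2}$, which it quotes from \cite{HO15}, Proposition~6. This is shorter and sidesteps the sectoriality and contour bookkeeping you flag as the main obstacle, at the price of importing the $\V'\to\Hi$ semigroup bound as a black box. Your Cauchy-formula argument is more self-contained (it manufactures the semigroup bound from the resolvent estimates you already have, and the scaling computation you describe is essentially the one the paper records later as \eqref{eq101}), but it requires the extra step of checking uniform sectoriality and carrying the interpolated resolvent estimate along the whole contour. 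Both routes are valid; the paper's is the quicker proof of the lemma, while yours would make the lemma independent of the external reference.
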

\begin{proof} The arguments are classical but we write them here for clarity of the exposition. \\
We have for $w \in \V'$
\begin{eqnarray*}
\delta \| (\mu + \A(t))^{-1} w \|_{\V}^2 &\le& \Re \langle \A(t) (\mu + \A(t))^{-1} w, (\mu + \A(t))^{-1} w \rangle\\
&=& \Re \langle w- \mu (\mu + \A(t))^{-1} w, (\mu + \A(t))^{-1} w \rangle\\
&\le& \| w \|_{\V'} \| (\mu + \A(t))^{-1} w \|_{\V} +  C \| (\mu + \A(t))^{-1} w \|_{\V},
\end{eqnarray*} 
which gives $\|(\mu+ \mathcal{A}(t))^{-1}\|_{\mathcal{L}(\V',\V)} \le C'$. A similar argument gives the estimate $\|(\mu+\A(t))^{-1}\|_{\mathcal{L}(\Hi,\V)} \le \frac{C}{(\mu+1)^{\frac{1}{2}}}$.  The first assertion  follows by interpolation. 
For the second one we use 
\begin{align*}
&\|(\mu+ \A(t))^{-1}\|_{\mathcal{L}(\V_{\gamma}',\Hi)} \\
&\le \|(\mu+ \A(t))^{-1}\|^{\gamma}_{\mathcal{L}(\V',\Hi)} \|(\mu+ \A(t))^{-1}\|^{1-\gamma}_{\mathcal{L}(\Hi)}\\
& \le \frac{C}{(\mu+1)^{1-\frac{\gamma}{2}}}.
\end{align*}
We have   $ \|e^{rA(t)}\|_{\mathcal{L}(\Hi)}\leq 1 $ and $\|e^{-rA(t)}\|_{\mathcal{L}(\V', \Hi)} \le \frac{C}{\sqrt{r}}$ (see e.g., \cite{HO15}, 
Proposition 6). Since $\V_{\gamma}'=[\V',\Hi]_{1-\gamma}$ we use interpolation and obtain the third estimate. 
\end{proof}

We make some comments on property [H4]. Let $\gamma \in [0, 1]$ and consider for  fixed $t, s \in [0, \tau]$
\begin{equation}\label{eq2.1}
\A(t) - \A(s) \in \mathcal{L}(\V, \V'_\gamma).
\end{equation}
Obviously, \eqref{eq2.1} holds for all $t, s \in [0, \tau]$ if $\gamma = 1$ since each operator $\A(t)$ is bounded from $\V$ into $\V'$. \\
Observe that \eqref{eq2.1} is equivalent to 
\begin{equation}\label{eq2.2}
| \fra(t,u,v) - \fra(s,u,v) | \le C_{t,s} \| u \|_{\V} \| v \|_{\V_\gamma}
\end{equation}
for some positive constant $C_{t,s}$ and all $u, v \in \V$. Morover, one can take  $C_{t,s} = \| \A(t) - \A(s) \|_{\mathcal{L}(\V, \V'_\gamma)}$. In order to see this, one writes for  $u, v \in \V$
\begin{equation}\label{eqb00}
 \fra(t,u,v) - \fra(s, u, v) = \langle \A(t) u - \A(s)u, v \rangle,
 \end{equation}
and obtains immediately that \eqref{eq2.1} implies \eqref{eq2.2}. For the converse, we note that by \eqref{eqb00},  $v \mapsto  \langle \A(t) u - \A(s)u, v \rangle $ extends to  a (anti-) linear continuous functional on $\V_\gamma$ (for fixed $u \in \V$). The rest of the claim is easy to check. \\
Similarly to the previous remark,  $\A(t) - \A(s)$ extends to a bounded operator from $\V_\gamma$ to $\V'$ {\it if and only if} 
\begin{equation}\label{eq2.3}
| \fra(t,u,v) - \fra(s,u,v) | \le C_{t,s} \| u \|_{\V_\gamma} \| v \|_{\V}
\end{equation}
for al $u, v \in \V$. \\

Our next lemma shows stability of the Kato square root property if \eqref{eq2.2} or \eqref{eq2.3} holds for some $\gamma \in [0, 1)$. 

\begin{lemma}\label{lem2.2} Suppose the assumptions of the previous lemma.  Given $t, s \in [0, \tau]$. Suppose that there exists $\gamma \in [0, 1)$ such that 
either \eqref{eq2.2} or \eqref{eq2.3} holds for $t, s$. If $D(A(s)^{\frac{1}{2}})  =  \V$ then $D(A(t)^{\frac{1}{2}}) = \V$. 
\end{lemma}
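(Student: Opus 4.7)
The plan is to use the Balakrishnan integral representations
\begin{equation*}
A(t)^{1/2} u = \frac{1}{\pi}\int_0^\infty \lambda^{-1/2} A(t)(\lambda + A(t))^{-1} u\, d\lambda, \qquad A(t)^{-1/2} w = \frac{1}{\pi}\int_0^\infty \lambda^{-1/2} (\lambda + A(t))^{-1} w\, d\lambda,
\end{equation*}
together with the analogous formulas for $A(s)$. Writing $A(\lambda+A)^{-1} = I - \lambda(\lambda+A)^{-1}$ and applying the second resolvent identity, one obtains
\begin{equation*}
A(t)^{1/2} u - A(s)^{1/2} u = \frac{1}{\pi}\int_0^\infty \lambda^{1/2}(\lambda + \mathcal{A}(t))^{-1}[\mathcal{A}(t) - \mathcal{A}(s)](\lambda + \mathcal{A}(s))^{-1} u\, d\lambda,
\end{equation*}
and the corresponding identity with $\lambda^{-1/2}$ in place of $\lambda^{1/2}$ for $A(s)^{-1/2} w - A(t)^{-1/2} w$. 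The goal is to show the first integral defines a bounded operator $\V \to \Hi$ and the second a bounded operator $\Hi \to \V$. Combined with the hypothesis $D(A(s)^{1/2}) = \V$, which makes $A(s)^{1/2}\colon\V \to \Hi$ and $A(s)^{-1/2}\colon\Hi \to \V$ bounded isomorphisms, the triangle inequality will then give $\V \subset D(A(t)^{1/2})$ and $D(A(t)^{1/2}) \subset \V$, hence $D(A(t)^{1/2}) = \V$.

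The two key inputs for controlling the integrands are Lemma~\ref{lem2.1}, which provides the resolvent decays between $\V_\gamma'$, $\Hi$ and $\V$, and the improved bound
\begin{equation*}
\|(\lambda + A(s))^{-1} u\|_\V \le \frac{C\|u\|_\V}{\lambda + 1} \qquad (u \in \V),
\end{equation*}
obtained from the commutation of $A(s)^{1/2}$ with the resolvent together with the Kato norm equivalence $\|A(s)^{1/2}\cdot\| \asymp \|\cdot\|_\V$ at $s$. In Case~\eqref{eq2.2}, the three factors of the integrand split as $\V \to \V$ with decay $(\lambda+1)^{-1}$, then $\V \to \V_\gamma'$ boundedly via \eqref{eq2.2}, then $\V_\gamma' \to \Hi$ with decay $(\lambda+1)^{-1+\gamma/2}$ by Lemma~\ref{lem2.1}(2), producing an integrable tail $\lambda^{1/2}(\lambda+1)^{-2+\gamma/2}$ precisely when $\gamma < 1$. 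Routing the outer resolvent into $\V$ instead (via Lemma~\ref{lem2.1}(1)) makes the $A(t)^{-1/2} - A(s)^{-1/2}$ integrand behave like $\lambda^{-1/2}(\lambda+1)^{-3/2+\gamma/2}$, which is again integrable for $\gamma < 1$.

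The main difficulty I anticipate is Case~\eqref{eq2.3}, in which $[\mathcal{A}(t)-\mathcal{A}(s)]$ maps $\V_\gamma$ (not $\V$) into $\V'$ (not $\V_\gamma'$). The $A^{-1/2}$ integral can still be handled directly: interpolating $(\lambda + \mathcal{A}(s))^{-1}$ between its $\Hi \to \Hi$ and $\Hi \to \V$ bounds gives $\Hi \to \V_\gamma$ with decay $(\lambda+1)^{-1+\gamma/2}$, and since the outer resolvent $\V' \to \V$ is uniformly bounded the same kind of integrable tail appears. The first integral, however, resists this approach: the natural estimates only produce a non-integrable tail $\lambda^{1/2}(\lambda+1)^{-3/2}$. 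My planned remedy is to pass to the adjoint form $\fra^*$, for which \eqref{eq2.3} for $\fra$ is nothing but \eqref{eq2.2} for $\fra^*$; running the Case~\eqref{eq2.2} argument on $\fra^*$ (and using that the Kato property transfers between $A(s)$ and $A^*(s)$ in this form setting) produces $D(A^*(t)^{1/2}) = \V$, and the identity $D(A(t)^{1/2}) = D(A^*(t)^{1/2})$ for the class of m-sectorial operators at hand then concludes the proof.
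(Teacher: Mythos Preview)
Your proposal is correct and follows essentially the same route as the paper: Balakrishnan representations for $A^{\pm 1/2}$, the second resolvent identity to produce the difference $(\lambda+\mathcal A(t))^{-1}[\mathcal A(t)-\mathcal A(s)](\lambda+\mathcal A(s))^{-1}$, the resolvent bounds of Lemma~\ref{lem2.1} together with the $\mathcal L(\V)$-bound $\|(\lambda+A(s))^{-1}\|_{\mathcal L(\V)}\le C(1+\lambda)^{-1}$ coming from the Kato property at $s$, and finally the passage to the adjoint form for case~\eqref{eq2.3}. One minor arithmetic slip: the $A^{-1/2}$ integrand decays like $\lambda^{-1/2}(1+\lambda)^{-1+\gamma/2}$ (not $(1+\lambda)^{-3/2+\gamma/2}$), since the inner resolvent contributes only $(1+\lambda)^{-1/2}$ from $\Hi$ to $\V$; the integral is of course still convergent for $\gamma<1$.
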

\begin{proof}

Suppose that \eqref{eq2.2} is satisfied. Recall that 
 $$ A(t)^{-\frac{1}{2}}=\frac{1}{\pi}\int_{0}^{\infty}\mu^{-\frac{1}{2}} (\mu+A(t))^{-1}\,  d\mu.$$
 Hence
 $$A(t)^{-\frac{1}{2}}-A(s)^{-\frac{1}{2}} =\frac{-1}{\pi}\int_{0}^{\infty}\mu^{-\frac{1}{2}} (\mu+\A(t))^{-1}(\mathcal{A}(t)-\mathcal{A}(s)) (\mu+A(s))^{-1}\,  d\mu.$$
Using the previous lemma, we estimate the $\Hi- \V$ norm of the term in the integral by
 \begin{align*}
 & \mu^{-\frac{1}{2}} \|(\mu+A(t))^{-1}\|_{\mathcal{L}(\V_{\gamma}',\V)} \|(\mu+A(s))^{-1}\|_{\mathcal{L}(\Hi,\V)}
\|\mathcal{A}(t)-\mathcal{A}(s)\|_{\mathcal{L}(\V,\V'_\gamma)}\\
&\le C C_{t,s} \mu^{-\frac{1}{2}} (1+\mu)^{\frac{\gamma}{2}-1}.
\end{align*}
This implies that $A(t)^{-\frac{1}{2}}-A(s)^{-\frac{1}{2}}$ is bounded from $\Hi$ to $\V$ and hence 
$D(A(t)^{\frac{1}{2}}) \subseteq \V$.
In addition, for $u \in D(A(t)^{\frac{1}{2}})$
\begin{align*}
\|  u \|_{\V} &\le \|[A(t)^{-\frac{1}{2}}-A(s)^{-\frac{1}{2}}] A(t)^{\frac{1}{2}} u \|_{\V} +  \|A(s)^{-\frac{1}{2}} A(t)^{\frac{1}{2}} u \|_{\V}\\
&\le \left( \|A(t)^{-\frac{1}{2}}-A(s)^{-\frac{1}{2}}\|_{\mathcal{L}(\Hi,\V)}+\|A(s)^{-\frac{1}{2}}\|_{\mathcal{L}(\Hi,\V)} \right) \|A(t)^{\frac{1}{2}} u \|_{\Hi}\\
& \le C' (1 + C_{t,s}) \|A(t)^{\frac{1}{2}} u \|_{\Hi}.
\end{align*}
Suppose now that $u  \in \V = D(A(s)^{\frac{1}{2}})$. Then we write as before
$$ \A(t)^{\frac{1}{2}} u =\frac{1}{\pi}\int_{0}^{\infty}\mu^{-\frac{1}{2}} A(t)(\mu+A(t))^{-1}u\, d\mu$$
so that 
\begin{align*}
(\A(t)^{\frac{1}{2}}-\A(s)^{\frac{1}{2}}) u &= \frac{1}{\pi}\int_{0}^{\infty}\mu^{-\frac{1}{2}} [ A(t)(\mu+A(t))^{-1}u - A(s) (\mu + A(s))^{-1} u]\,  d\mu\\
&=\frac{-1}{\pi}\int_{0}^{\infty}\mu^{\frac{1}{2}} [ (\mu+A(t))^{-1}u - (\mu + A(s))^{-1} u]\, d\mu\\
&= \frac{1}{\pi}\int_{0}^{\infty}\mu^{\frac{1}{2}}  (\mu+\A(t))^{-1}(\mathcal{A}(t)-\mathcal{A}(s)) (\mu+A(s))^{-1}u\,  d\mu.
\end{align*}
We estimate the norm in $\Hi$ of the term inside the integral by
\begin{align*}
 & \mu^{\frac{1}{2}} \|(\mu+\A(t))^{-1}\|_{\mathcal{L}(\V'_\gamma,\Hi)} \|(\mu+A(s))^{-1}\|_{\mathcal{L}(\V)}
\|\mathcal{A}(t)-\mathcal{A}(s)\|_{\mathcal{L}(\V,\V'_\gamma)} \| u \|_{\V}\\
&\le C C_{t,s} \mu^{\frac{1}{2}} (1+\mu)^{\frac{\gamma}{2}-2}  \| u \|_{\V}.
\end{align*}
This gives $u \in D(A(t)^{\frac{1}{2}})$ and 
$$\| A(t)^{\frac{1}{2}} u \| \le C C_{t,s} \| u \|_{\V} + \| A(s)^{\frac{1}{2}} u \|$$
which proves the lemma. Note that if we assume \eqref{eq2.3} then we argue by duality and prove the lemma for ${A(t)^*}^{\frac{1}{2}}$. It is well known that the   equality  $D({A(t)^*}^{\frac{1}{2}}) = \V$ is equivalent to $D(A(t)^{\frac{1}{2}}) = \V$. 
\end{proof} 

\begin{proposition}\label{prop2.3}  Suppose that [H1]-[H3] are satisfied with $\nu = 0$. Fix $s \in [0, \tau]$ and suppose that either \eqref{eq2.2} or \eqref{eq2.3} holds for  some $\gamma \in [0, 1)$ with a constant $C$ independent of $t \in [0, \tau]$   (i.e., $C_{t,s} \le C$ for all $t$).  If $D(A(s)^{\frac{1}{2}})=\V$,  then  $D(A(t)^{\frac{1}{2}})= \V$ for all $t \in [0, \tau] $ and there exist positive  constants $C_1, C_2$ such that  
\begin{equation}\label{KatoU}
C_1 \| u \|_{\V} \le \| A(t)^{\frac{1}{2}} u \| \le C_2 \| u \|_{\V} \ {\rm  for\  all\ }  u \in \V \ {\rm and\ all \ } t \in [0, \tau]. 
\end{equation}
\end{proposition}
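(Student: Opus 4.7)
The plan is to deduce the proposition directly from Lemma \ref{lem2.2} by tracking how its constants depend on $C_{t,s}$, which is now uniformly bounded by $C$.

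First I would use the hypothesis $D(A(s)^{\frac{1}{2}})=\V$ at the fixed point $s$ to record a base estimate: by the closed graph theorem applied to $A(s)^{\frac{1}{2}}:\V\to\Hi$ and to $A(s)^{-\frac{1}{2}}:\Hi\to\V$ (the latter is a valid mapping because [H3] with $\nu=0$ makes $A(s)$ invertible on $\Hi$, so $A(s)^{-\frac{1}{2}}$ is bounded on $\Hi$ with range $D(A(s)^{\frac{1}{2}})=\V$), there exist constants $c_s,c'_s>0$ with
$$c_s\|u\|_{\V}\le \|A(s)^{\frac{1}{2}}u\|\le c'_s\|u\|_{\V},\qquad u\in\V.$$

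Next, for every $t\in[0,\tau]$ Lemma \ref{lem2.2} applies because either \eqref{eq2.2} or \eqref{eq2.3} holds at the pair $(t,s)$ with $C_{t,s}\le C$, giving $D(A(t)^{\frac{1}{2}})=\V$. The key remaining point is to inspect the proof of Lemma \ref{lem2.2} and notice that the two intermediate inequalities
$$\|A(t)^{-\frac{1}{2}}-A(s)^{-\frac{1}{2}}\|_{\mathcal{L}(\Hi,\V)}\le K\,C_{t,s},\qquad \|(A(t)^{\frac{1}{2}}-A(s)^{\frac{1}{2}})u\|\le K\,C_{t,s}\,\|u\|_{\V}$$
depend on $t$ only through $C_{t,s}$ and on the universal constant $K$ arising from Lemma \ref{lem2.1}. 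Substituting $C_{t,s}\le C$ and combining with the base estimate at $s$ through the triangle inequalities used at the end of the proof of Lemma \ref{lem2.2}, I obtain
$$\|A(t)^{\frac{1}{2}}u\|\le (KC+c'_s)\|u\|_{\V},\qquad \|u\|_{\V}\le \bigl(KC+ c_s^{-1}\bigr)\|A(t)^{\frac{1}{2}}u\|,$$
with both constants independent of $t$. These are precisely the required bounds with $C_1^{-1}=KC+c_s^{-1}$ and $C_2=KC+c'_s$.

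The only case-dependent step is \eqref{eq2.3}: there the same chain of estimates is carried out on $A(t)^*$, and the resulting equivalence $D({A(t)^*}^{\frac{1}{2}})=\V$ with uniform constants is then transferred back to $A(t)^{\frac{1}{2}}$ via the duality remark at the end of the proof of Lemma \ref{lem2.2}. No step involves a genuine obstacle; the whole substance lies in the careful bookkeeping of constants produced by Lemma \ref{lem2.2}, and the uniformity in $t$ follows automatically once the $s$-dependent quantities are absorbed into constants that do not see $t$.
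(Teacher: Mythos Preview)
Your proposal is correct and follows exactly the route taken in the paper: the paper's own proof of this proposition consists of a single sentence, ``All the details are already given in the proof of the previous lemma,'' and your write-up is precisely the bookkeeping of constants from the proof of Lemma~\ref{lem2.2} that this sentence leaves to the reader. The only point worth a remark is the case of \eqref{eq2.3}: you invoke the duality transfer $D({A(t)^*}^{1/2})=\V \Leftrightarrow D(A(t)^{1/2})=\V$ ``with uniform constants'', and this is indeed legitimate here since the constants in that equivalence depend only on the form constants $M,\delta$ from [H1]--[H3], which are uniform in $t$.
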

\begin{proof} All the details are  already given in the proof of the previous lemma. 
\end{proof}

\begin{definition}\label{def2.4} We say that $(A(t))$ (or the corresponding forms $\fra(t)$) satisfy the {\it uniform Kato square root property} if 
$D(A(t)^{\frac{1}{2}})= \V$ for all $t \in [0, \tau]$ and there exist constants $C_1, C_2 > 0$ such that \eqref{KatoU} is satisfied. 
\end{definition}

The uniform Kato square root property is obviously satisfied for symmetric forms. It is also satisfied for uniformly elliptic operators (not necessarily symmetric) 
$$A(t) = - \sum_{k,l= 1}^d \partial_k (a_{kl}(t,x) \partial_l)$$
on $L^2(\R^d)$ since $\| \nabla u \|_2 $ is equivalent to $\| A(t)^{\frac{1}{2}} u \|_2$ with constants depending only on the dimension and the ellipticity constants. See \cite{AHLMT}.  \\
The previous proposition says  that in order to have the uniform Kato square root property one needs only to check that $D(A(s)^{\frac{1}{2}}) = \V$ for one $s \in [0, \tau]$ provided \eqref{eq2.2} or \eqref{eq2.3} holds for some $\gamma \in [0, 1)$. 

In the next lemma we show  a quadratic estimate for $A(t)$ with constant independent of $t$. Here we assume  the uniform Kato square root property and give  a short  proof for  the quadratic estimate. It is possible to prove the same result without the uniform Kato square root property by proving that the holomorphic functional calculus of  $A(t)$ has uniform estimate with respect to $t$ (this is the case since the resolvent have uniform estimates). It is well known that quadratic estimates in $\Hi$ are intimately related to the holomorphic functional calculus, see \cite{Mc}.   Quadratic estimates are an important tool in harmonic analysis and we will use them at several places  in the proofs
of maximal regularity.  

\begin{lemma}\label{lem2.4}
Suppose in addition to [H1]-[H3] (with $\nu = 0$) that the uniform Kato square root property is satisfied. Then there exists a constant $C$ such that 
for every $t \in [0, \tau]$
\begin{equation}\label{eq2.4}
\int_0^\tau \| A(t)^{\frac{1}{2}} e^{-s A(t)} x \|^2 \, ds \le C  \| x \|^2
\end{equation}
for all $x \in \Hi$.
\end{lemma}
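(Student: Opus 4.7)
The plan is a direct two-line energy argument; the uniformity in $t$ comes for free from the uniform Kato constant $C_2$ and the uniform coercivity constant $\delta$, both independent of $t$ by assumption.

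First, I would justify that the integrand in \eqref{eq2.4} is well-defined. For each $s > 0$ and $x \in \Hi$ we have $e^{-s A(t)} x \in D(A(t)) \subset D(A(t)^{\frac{1}{2}}) = \V$ by the uniform Kato square root property, so $A(t)^{\frac{1}{2}} e^{-sA(t)} x$ makes sense and is continuous in $s$ on $(0,\tau]$. Moreover, $s \mapsto \| e^{-sA(t)} x \|^2$ is differentiable on $(0,\tau)$ with
\[
\frac{d}{ds} \| e^{-s A(t)} x \|^2 = -2 \Re (A(t) e^{-s A(t)} x,\, e^{-s A(t)} x) = -2 \Re \fra(t, e^{-s A(t)} x, e^{-s A(t)} x).
\]

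Second, I would combine the coercivity [H3] (with $\nu = 0$, which the paper has already reduced to) with the upper Kato estimate in \eqref{KatoU}, applied at $u = e^{-sA(t)}x$:
\[
\Re \fra(t, u, u) \ge \delta \| u \|_\V^2 \ge \frac{\delta}{C_2^{\,2}} \| A(t)^{\frac{1}{2}} u \|^2.
\]
Since $\delta$ and $C_2$ do not depend on $t$, this is the only place where the uniformity of the final constant is at stake, and it is free here.

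Third, I would integrate the resulting differential inequality
\[
\frac{d}{ds} \| e^{-s A(t)} x \|^2 \le -\frac{2 \delta}{C_2^{\,2}} \| A(t)^{\frac{1}{2}} e^{-s A(t)} x \|^2
\]
from $0$ to $\tau$, using $\| e^{-\tau A(t)} x \|^2 \ge 0$, to obtain
\[
\int_0^\tau \| A(t)^{\frac{1}{2}} e^{-s A(t)} x \|^2 \, ds \le \frac{C_2^{\,2}}{2 \delta} \| x \|^2,
\]
which is \eqref{eq2.4} with $C = C_2^{\,2}/(2\delta)$.

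There is essentially no main obstacle: the argument is a classical energy estimate. The only minor point requiring care is behavior at $s = 0$ when $x \notin \V$ — handled either by starting the integration from $s = \e$ and letting $\e \to 0^+$ (the right-hand side is bounded by $\| x \|^2 - \| e^{-\e A(t)} x \|^2 \to 0$ of the boundary contribution in a uniform way), or by first proving the estimate for $x \in \V$ and extending by density using the fact that $\| A(t)^{\frac{1}{2}} e^{-sA(t)} \|_{\mathcal{L}(\Hi)} \le C s^{-1/2}$ makes the integrand locally integrable.
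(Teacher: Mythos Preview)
Your proposal is correct and follows essentially the same route as the paper: both use the uniform Kato upper bound together with coercivity to relate $\|A(t)^{1/2} e^{-sA(t)}x\|^2$ to $\Re\,\fra(t,e^{-sA(t)}x,e^{-sA(t)}x)=-\tfrac{1}{2}\tfrac{d}{ds}\|e^{-sA(t)}x\|^2$, then integrate in $s$ and discard the nonnegative boundary term at $s=\tau$. The only cosmetic difference is that the paper chains the inequalities inside the integral directly while you phrase it as a differential inequality; your additional remark on the behavior at $s=0$ is a point the paper leaves implicit.
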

\begin{proof} 
By the uniform Kato square root property, we have
\begin{eqnarray*}
\int_0^\tau \| A(t)^{\frac{1}{2}} e^{-s A(t)} x \|^2 \, ds &\le& C_2 \int_0^\tau \| e^{-s A(t)} x \|_{\V}^2 \, ds\\
&\le& C' \int_0^\tau \Re \fra(e^{-s A(t)} x, e^{-sA(t)} x)  \, ds\\ 
&=& C' \int_0^\tau \Re ( A(t) e^{-s A(t)} x, e^{-sA(t)} x)  \, ds\\
&=& - \frac{C'}{2}  \int_0^\tau \frac{d}{ds} \| e^{-sA(t)} x \|^2 \, ds\\
&=& \frac{C'}{2} [ \| x \|^2 - \| e^{-\tau A} x \|^2] \le \frac{C'}{2 }  \| x \|^2.
\end{eqnarray*}
This proves the lemma.
\end{proof}

We note that the constant $C$ is also independent of $\tau$. We could formulate the lemma with $\int_0^\infty \| A(t)^{\frac{1}{2}} e^{-s A(t)} x \|^2 \, ds$. Let us also mention the following $L^p$-version. 

\begin{lemma}\label{lem2.5}
Suppose the assumptions of the previous lemma. Let $p \ge 2$. Then there exists a constant $C_p$ such that 
\begin{equation}\label{eq2.5}
\int_0^\tau \| A(t)^{1/p} e^{-s A(t)} x \|^p \, ds \le C_p  \| x \|^p
\end{equation}
for all $x \in \Hi$.
\end{lemma}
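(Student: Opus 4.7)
The plan is to reduce the desired $L^p$-quadratic estimate to the $L^2$ case already handled in Lemma \ref{lem2.4}, via the moment (interpolation) inequality for fractional powers of sectorial operators.

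First, I would observe that under [H1]-[H3] with the reduction $\nu=0$, each $A(t)$ is m-accretive on $\Hi$ with numerical range in the fixed sector $\{z\in\C:|\arg z|\le \arctan(M/\delta)\}$, uniformly in $t$. Consequently, $\|e^{-sA(t)}\|_{\mathcal{L}(\Hi)}\le 1$ for $s\ge 0$, and the classical moment inequality for fractional powers of sectorial operators (see e.g. Pazy, Cor.~6.11) applies with a constant depending only on the uniform sectorial angle. Taking the exponents $\sigma=1/p$ and $\beta=1/2$ (so $\sigma/\beta=2/p\in(0,1]$ since $p\ge 2$), this produces
$$\|A(t)^{1/p}u\|\le C\,\|A(t)^{1/2}u\|^{2/p}\,\|u\|^{1-2/p},\qquad u\in D(A(t)^{1/2}),$$
with $C=C_p$ independent of $t\in[0,\tau]$.

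Next, I would apply this bound to $u=e^{-sA(t)}x$, using the contractivity $\|e^{-sA(t)}x\|\le\|x\|$, to obtain
$$\|A(t)^{1/p}e^{-sA(t)}x\|^p\le C^p\,\|A(t)^{1/2}e^{-sA(t)}x\|^2\,\|x\|^{p-2}.$$
Integrating over $s\in(0,\tau)$ and invoking Lemma \ref{lem2.4} then yields
$$\int_0^\tau \|A(t)^{1/p}e^{-sA(t)}x\|^p\,ds\le C^p\,\|x\|^{p-2}\int_0^\tau \|A(t)^{1/2}e^{-sA(t)}x\|^2\,ds\le C_p\,\|x\|^p,$$
with $C_p$ independent of $t$, which is the desired estimate.

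The only point requiring attention is the uniformity in $t$ of the moment-inequality constant, but this is immediate from the uniform sectoriality of $(A(t))_{t\in[0,\tau]}$ provided by [H2]-[H3] (the resolvent estimate \eqref{eq2.0} already reflects this). I do not anticipate any genuine obstacle: the argument is essentially a Gagliardo-Nirenberg-type interpolation built on top of the $p=2$ quadratic estimate of Lemma \ref{lem2.4}, and the uniform Kato square root assumption enters only implicitly through that lemma.
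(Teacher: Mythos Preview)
Your argument is correct and gives a genuinely different, more elementary route than the paper's. The paper proceeds by Stein's complex interpolation: it sets $F(z)x := A(t)^{z/2}e^{-sA(t)}x$, uses the uniform boundedness of imaginary powers of accretive operators to control the boundary lines $\Re z=0$ (into $L^\infty(0,\tau;\Hi)$) and $\Re z=1$ (into $L^2(0,\tau;\Hi)$ via Lemma~\ref{lem2.4}), and then interpolates to $\Re z=2/p$. Your proof instead applies the moment inequality $\|A(t)^{1/p}u\|\le C\,\|A(t)^{1/2}u\|^{2/p}\|u\|^{1-2/p}$ pointwise in $s$, then integrates; this bypasses both the holomorphic family and the bounded-imaginary-powers fact, at the cost of invoking the moment inequality (which is itself standard for uniformly sectorial operators with $0\in\rho(A(t))$, as here after the reduction $\nu=0$). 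The paper's method is more flexible if one later wants estimates for functions of $A(t)$ other than pure powers, while yours is shorter and requires less machinery for the precise statement at hand. Your remark about the uniformity in $t$ of the moment-inequality constant is the right thing to check, and your justification via the uniform sectorial angle determined by $M,\delta$ is correct.
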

\begin{proof}
We fix $t \in [0, \tau]$ and $s > 0$.  We define 
$$F(z) x := A(t)^{z/2} e^{-s A(t)} x.$$
It is  a classical fact that $F$ is a holomorphic function on $\C^+$. In addition, each operator $A(t)$, as an accretive operator on $\Hi$,  has bounded imaginary powers. That is $\| A^{is} \|_{\mathcal{L}(\Hi)} \le C$ for some constant $C$ and all $s \in \R$.  See \cite{Mc}. 
Using this one obtains immediately that 
$$ \| F(is) x \|_{L^\infty(0, \tau; \Hi)} \le C \| x \|.$$ 
On the other hand, by Lemma \ref{lem2.4} and again uniform boundedness of imaginary powers on $\Hi$ we obtain
$$ \| F(1+ is) x \|_{L^2(0, \tau; \Hi)} \le  C \| x \|.$$ 
We apply Stein's complex interpolation theorem  to obtain that  for all $p\geq 2$
$$ \| F (\tfrac{2}{p}) x \|_{L^p(0, \tau; \Hi)} \le  C_p \| x \|$$
and we obtain the lemma.
\end{proof}

Let $u$ be the solution of (P') by Lions' theorem. Lions also proved that $u \in C([0, \tau]; \Hi)$. Since $u \in L^2(0, \tau; \V)$ we have $u(t) \in \V$ for a.e. $t \in [0, \tau]$. It is very useful to know whether  $u(t) \in \V$ for every $t \in [0, \tau]$. We prove this in the following lemma under an additional assumption that $u \in L^\infty(0, \tau; \V)$. We shall see later that this property holds when we assume that 
\eqref{eqHyp} is satisfied. 

\begin{lemma}\label{lem2.6} 
Suppose [H1]-[H3] and suppose in addition  that the solution $u$ belongs to $L^\infty(0, \tau; \V)$. Then $u(t) \in \V$ for every $t \in [0, \tau]$.
\end{lemma}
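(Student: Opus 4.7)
The plan is to combine the $L^\infty(0,\tau;\V)$-bound (which gives essentially bounded $\V$-values) with the continuity of $u$ into $\Hi$ (provided by Lions' theorem) via a weak compactness argument in the Hilbert space $\V$.

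More precisely, fix an arbitrary $t_0 \in [0,\tau]$. By the assumption $u \in L^\infty(0,\tau;\V)$, there is a null set $N \subset [0,\tau]$ and a constant $M := \|u\|_{L^\infty(0,\tau;\V)}$ such that $u(t) \in \V$ with $\|u(t)\|_{\V} \le M$ for every $t \in [0,\tau] \setminus N$. Since $N$ has Lebesgue measure zero, its complement is dense in $[0,\tau]$, so I can select a sequence $t_n \to t_0$ with $t_n \in [0,\tau] \setminus N$. Then $\{u(t_n)\}_n$ is a bounded sequence in $\V$.

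Since $\V$ is a Hilbert space, it is reflexive, and therefore bounded sequences admit weakly convergent subsequences. Extract $u(t_{n_k}) \rightharpoonup v$ weakly in $\V$ for some $v \in \V$. The continuous inclusion $\V \hookrightarrow \Hi$ is a bounded linear map, hence weak-to-weak continuous, so $u(t_{n_k}) \rightharpoonup v$ weakly in $\Hi$ as well. On the other hand, by Lions' theorem the solution $u$ belongs to $C([0,\tau];\Hi)$, so $u(t_{n_k}) \to u(t_0)$ strongly in $\Hi$, and in particular weakly in $\Hi$. By uniqueness of weak limits in $\Hi$, we conclude $v = u(t_0)$, and thus $u(t_0) = v \in \V$. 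Since $t_0 \in [0,\tau]$ was arbitrary, this yields the claim.

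The argument is essentially soft and does not really have a delicate point; the only thing to notice is that one cannot deduce pointwise membership in $\V$ directly from $L^\infty(0,\tau;\V)$ alone (the assertion only holds a.e.), so it is crucial to exploit the $\Hi$-continuity of $u$ in order to identify the weak $\V$-limit along the chosen sequence with $u(t_0)$.
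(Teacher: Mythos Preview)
Your proof is correct and follows essentially the same approach as the paper's own proof: both use density of the set where $u(t)\in\V$ with a uniform $\V$-bound, extract a weakly convergent subsequence in $\V$, and identify the limit via the $\Hi$-continuity of $u$ from Lions' theorem. Your write-up is in fact slightly more explicit in justifying the passage from weak convergence in $\V$ to weak convergence in $\Hi$ and in invoking reflexivity.
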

\begin{proof} 
Let $\Gamma=\{t\in [0,\tau]  \    \mbox{s.t.}\  u(t)\notin \V \}$.  Since $\Gamma$ has measure zero,  $[0, \tau]\setminus \Gamma$ is dense in $[0, \tau]$. Suppose that  $t\in \Gamma$.  There exists a sequence  $(t_{n})_{n}\in [0,\tau]\setminus \Gamma$ such that $t_{n}\rightarrow t$ as $n \to \infty$.
Since the sequence $(u(t_{n}))$ is bounded in $\V$ we can exact a subsequence $u(t_{n_k})$ which converges weakly to some $v$ 
in $\V$. By continuity of $u$ in $\Hi$, $u(t_{n_k})$ converges (in $\Hi$)  to $u(t)$. 
 This gives $u(t) = v \in \V$.  Hence $\Gamma$ is empty.
\end{proof}

\section{Key estimates}\label{sec3}

In this section we state and prove some other  estimates which we will need in the proofs of the main results.
\medskip

\begin{lemma}\label{lem3.1}
Suppose in addition to [H1]-[H3] (with $\nu = 0$) that the uniform Kato square root property is satisfied.  Define 
$$L_0(f)(t) := \int_0^t e^{-(t-s) A(t)} f(s) \, ds.$$
Then $L_0 : L^2(0, \tau; \Hi) \to L^\infty(0, \tau; \V)$ is  a bounded operator. 
\end{lemma}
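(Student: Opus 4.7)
The plan is to exploit the uniform Kato square root property to convert the $\V$-norm into the $\Hi$-norm of $A(t)^{1/2}L_0(f)(t)$, and then use duality combined with the quadratic estimate of Lemma \ref{lem2.4} applied to the adjoint. Specifically, by the uniform Kato property we have
\[
\| L_0(f)(t) \|_{\V} \le C\, \| A(t)^{1/2} L_0(f)(t) \|,
\]
uniformly in $t$, so it suffices to bound the right-hand side.

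For fixed $t \in [0,\tau]$, I would test against an arbitrary $x \in \Hi$ and compute
\[
\bigl\langle A(t)^{1/2} L_0(f)(t),\, x \bigr\rangle
= \int_0^t \bigl( f(s),\, e^{-(t-s)A(t)^*} A(t)^{*1/2} x \bigr)\, ds,
\]
using that $A(t)^{1/2}$ commutes with $e^{-(t-s)A(t)}$ and passing the operator to the right factor as $A(t)^{*1/2} e^{-(t-s)A(t)^*}$. Cauchy--Schwarz in $L^2(0,t;\Hi)$ then gives
\[
\bigl| \bigl\langle A(t)^{1/2} L_0(f)(t),\, x \bigr\rangle \bigr|
\le \| f \|_{L^2(0,\tau;\Hi)}
\left( \int_0^t \bigl\| A(t)^{*1/2} e^{-(t-s)A(t)^*} x \bigr\|^2\, ds \right)^{1/2}.
\]
The inner integral, after the change of variable $r = t-s$, is exactly the quadratic expression controlled by Lemma \ref{lem2.4} applied to the adjoint family $A(t)^*$, with a constant independent of $t$. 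Taking the supremum over $x$ in the unit ball of $\Hi$ yields $\| A(t)^{1/2} L_0(f)(t) \| \le C \| f \|_{L^2(0,\tau;\Hi)}$, and then taking the supremum over $t$ gives the desired bound.

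The one subtle point is the quadratic estimate for $A(t)^*$: Lemma \ref{lem2.4} is stated for $A(t)$ and uses the uniform Kato property of $A(t)$. However, the adjoint forms $\fra^*(t)(u,v) = \overline{\fra(t,v,u)}$ satisfy [H1]--[H3] with the same constants, and the uniform Kato property transfers to the adjoint family (the equivalence $D(A(t)^{1/2}) = \V \iff D(A(t)^{*1/2}) = \V$ holds whenever both inclusions come with uniform constants, as is implicit in the setting here). Thus the proof of Lemma \ref{lem2.4} applies verbatim to $A(t)^*$, producing the required uniform bound. This transfer to the adjoint is really the only nontrivial step; once granted, the rest is a clean duality-plus-Cauchy--Schwarz argument.
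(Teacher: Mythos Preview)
Your proof is correct and follows essentially the same route as the paper: convert the $\V$-norm to $\|A(t)^{1/2}L_0(f)(t)\|$ via the uniform Kato property, then dualize, apply Cauchy--Schwarz, and invoke the quadratic estimate of Lemma~\ref{lem2.4} for the adjoint $A(t)^*$. The paper handles the transfer to the adjoint with the brief remark that $A(t)^*$ satisfies the same properties as $A(t)$; your more detailed justification of this point is welcome but not a departure in method.
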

\begin{proof}
By the uniform Kato square root property,
$$ \| L_0(f)(t) \|_{\V} \le C_2 \| A(t)^{\frac{1}{2}} \int_0^t  e^{-(t-s)A(t)} f(s)\, ds \|.$$
On the other hand, for $x \in \Hi$
\begin{eqnarray*}
&& \hspace{-.7cm}| (A(t)^{\frac{1}{2}} \int_0^t  e^{-(t-s)A(t)} f(s)\, ds, x) |\\ 
&=& \int_0^t (f(s), {A(t)^*}^{\frac{1}{2}} e^{-(t-s)A(t)^*} x)\,  ds\\
&\le& \|f \|_{L^2(0, \tau; \Hi)} \left( \int_0^t  \| {A(t)^*}^{\frac{1}{2}} e^{-(t-s)A(t)^*} x \|^2 \, ds \right)^{\frac{1}{2}}\\
&\le& C \|f \|_{L^2(0, \tau; \Hi)} \| x \|.
\end{eqnarray*}
The latest inequality follows from Lemma \ref{lem2.4} applied to the adjoint operator $A(t)^*$ (note that $A(t)^*$ satisfies  the same properties  as $A(t)$). The constant $C$ is independent of $t$.
Therefore,
\begin{equation}\label{eq3.1}
\| L_0(f)(t) \|_{\V} \le C_2 C \|f \|_{L^2(0, \tau; \Hi)}. 
\end{equation}
This implies immediately that $L_0 : L^2(0, \tau; \Hi) \to L^\infty(0, \tau; \V)$ is bounded. 
\end{proof}

Now we study boundedness on $L^2(0, \tau; \Hi)$ of the operator
\begin{equation}\label{eq3.2}
L (f)(t) := \int_0^t A(t) e^{-(t-s) A(t)} f(s)\, ds.
\end{equation}
It is proved in \cite{HO15} that $L$ is bounded on $L^p(0, \tau; \Hi)$ for all $p \in (1, \infty)$ provided 
 $t \mapsto \fra(t,.,.)$ is $C^\epsilon$ for some $\epsilon > 0$ (or similarly, $t \mapsto \mathcal{A}(t)$ is 
$C^\epsilon$ on $[0, \tau]$ with values in $\mathcal{L}(\V, \V')$). The proof for the case $p=2$ is based on vector-valued pseudo-differential operators. The extension from $p = 2$ to $p \in (1, \infty)$ uses H\"ormander's almost $L^1$-condition for singular integral operators. Here we give a direct proof for the case $p=2$ which does not appeal to pseudo-differential operators. It is essentially  based on the quadratic estimate of Lemma \ref{lem2.4}.
 
\begin{proposition}\label{HO1} Suppose [H1]-[H3] (with $\nu = 0$) and the uniform Kato square root property. Let $\gamma \in (0, 1]$.
If 
\begin{equation}\label{eqLL2}
\sup_{s\in [0,\tau]}\int_{s}^{\tau}\frac{\|\mathcal{A}(t)-\mathcal{A}(s)\|_{\mathcal{L}(\V,\V_{\gamma}')}^{2}}{|t-s|^{{\gamma}}}\, dt <\infty,
\end{equation}
then the  operator $L$ is bounded on $L^{2}(0,\tau; \Hi)$. 
\end{proposition}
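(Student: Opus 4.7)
The plan is to use duality together with a factorization $A(t) = A(t)^{1/2} \cdot A(t)^{1/2}$ that exploits the quadratic estimate of Lemma \ref{lem2.4} on both sides. Writing $e^{-(t-s)A(t)} = e^{-(t-s)A(t)/2} \cdot e^{-(t-s)A(t)/2}$ and testing against arbitrary $g \in L^2(0,\tau;\Hi)$, one obtains
\[
(Lf, g)_{L^2(0,\tau;\Hi)} = \int_0^\tau \int_0^t \bigl( A(t)^{1/2} e^{-(t-s)A(t)/2} f(s),\ A(t)^{*1/2} e^{-(t-s)A(t)^*/2} g(t) \bigr)\, ds\, dt.
\]
Cauchy--Schwarz over the triangle $\{0 < s < t < \tau\}$ majorizes this by the product of two $L^2$-norms, an $f$-side and a $g$-side. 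The goal is to show each is controlled by $C\|f\|_{L^2}$ and $C\|g\|_{L^2}$ respectively.

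The $g$-side is handled directly by Lemma \ref{lem2.4} applied to $A(t)^*$ (which inherits all the hypotheses, including the uniform Kato square root property): for each fixed $t$, a change of variables $r = (t-s)/2$ yields $\int_0^t \| A(t)^{*1/2} e^{-(t-s)A(t)^*/2} g(t) \|^2\, ds \le C\, \|g(t)\|^2$, and integration in $t$ gives $\le C\|g\|_{L^2}^2$. The $f$-side is the analogous quantity
\[
\int_0^\tau \int_s^\tau \| A(t)^{1/2} e^{-(t-s)A(t)/2} f(s) \|^2\, dt\, ds,
\]
which is more delicate because now $A(t)$ depends on the outer integration variable of the quadratic estimate. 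I would compare with the \emph{frozen} version in which $A(t)$ is replaced by $A(s)$ inside the norm: for each $s$ this frozen integral is exactly of the form controlled by Lemma \ref{lem2.4} applied to $A(s)$, giving a bound by $C\|f(s)\|^2$, which integrates to $C\|f\|_{L^2}^2$. By the triangle inequality in the $L^2_t$-norm, it therefore remains to bound, uniformly in $s$,
\[
\int_s^\tau \| D(t,s) f(s) \|^2\, dt, \qquad D(t,s) := \phi_{t-s}(A(t)) - \phi_{t-s}(A(s)),\ \ \phi_r(z) := z^{1/2} e^{-rz/2}.
\]

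The main obstacle is a sharp pointwise bound on $\|D(t,s) f(s)\|$, and this is precisely where the regularity assumption \eqref{eqLL2} enters. I would derive it via the Dunford calculus representation on a sectorial contour $\Gamma$ enclosing the spectra,
\[
D(t,s) = \frac{1}{2\pi i} \int_\Gamma \phi_{t-s}(z)\, (z - \mathcal{A}(t))^{-1} (\mathcal{A}(t) - \mathcal{A}(s)) (z - \mathcal{A}(s))^{-1}\, dz,
\]
and for $f(s) \in \Hi$ estimate the $\Hi$-norm of the integrand by
\[
|\phi_{t-s}(z)|\, \| (z-\mathcal{A}(t))^{-1} \|_{\mathcal{L}(\V_\gamma',\Hi)}\, \|\mathcal{A}(t) - \mathcal{A}(s)\|_{\mathcal{L}(\V,\V_\gamma')}\, \|(z-\mathcal{A}(s))^{-1}\|_{\mathcal{L}(\Hi,\V)}\, \|f(s)\|.
\]
Lemma \ref{lem2.1} (extended from $\mu \ge 0$ to the sectorial contour by uniform sectoriality) supplies the factors $(1+|z|)^{\gamma/2-1}$ and $(1+|z|)^{-1/2}$, while $|\phi_{t-s}(z)| \le |z|^{1/2} e^{-c(t-s)|z|}$ on $\Gamma$. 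The scalar integral $\int_0^\infty \rho^{1/2} (1+\rho)^{(\gamma-3)/2} e^{-c(t-s)\rho}\, d\rho$ scales like $(t-s)^{-\gamma/2}$ under $u = c(t-s)\rho$, with the assumption $\gamma > 0$ being exactly what makes the resulting gamma integral finite. Thus
\[
\|D(t,s) f(s)\| \le C\, (t-s)^{-\gamma/2}\, \|\mathcal{A}(t) - \mathcal{A}(s)\|_{\mathcal{L}(\V,\V_\gamma')}\, \|f(s)\|,
\]
and squaring, integrating in $t \in (s,\tau)$, and invoking \eqref{eqLL2} uniformly in $s$ yields $\int_s^\tau \|D(t,s) f(s)\|^2\, dt \le C\|f(s)\|^2$. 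Integration in $s$ then completes the $f$-side estimate and closes the duality.
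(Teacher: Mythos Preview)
Your proposal is correct and follows essentially the same route as the paper: the duality pairing with $g$, the factorization $A(t)=A(t)^{1/2}A(t)^{1/2}$ together with the quadratic estimate for $A(t)^*$ on the $g$-side, the freezing argument replacing $A(t)$ by $A(s)$ on the $f$-side, and the Dunford contour representation of the difference combined with the resolvent bounds of Lemma~\ref{lem2.1} to extract the factor $(t-s)^{-\gamma/2}\|\mathcal{A}(t)-\mathcal{A}(s)\|_{\mathcal{L}(\V,\V_\gamma')}$. The only cosmetic difference is that the paper applies Cauchy--Schwarz first in $s$ (for fixed $t$) and then in $t$, whereas you apply it in one shot over the triangle; both yield the same $f$-side integral to control.
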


\begin{proof}
Fix $\gamma \in [0, 1]$. Take $g\in L^{2}(0,\tau;\Hi)$. We have 
\begin{align*}
& |\int_{0}^{\tau}\int_{0}^{t}(A(t)e^{-(t-s)A(t)}f(s),g(t))\, dsdt|\\
& = |\int_{0}^{\tau} \int_{0}^{t} \left( A(t)^{\frac{1}{2}}e^{-\frac{(t-s)}{2}A(t)}f(s), {A(t)^*}^{\frac{1}{2}}e^{-\frac{(t-s)}{2}A(t)^*}g(t) \right)\, ds dt|\\
&\le \int_{0}^{\tau}\int_{0}^{t} \|A(t)^{\frac{1}{2}}e^{-\frac{(t-s)}{2}A(t)}f(s)\| \|{A(t)^*}^{\frac{1}{2}}e^{-\frac{(t-s)}{2}A(t)^{*}}g(t)\| \, ds dt \\
&\le  \int_{0}^{\tau} \left(\int_{0}^{t} \|A(t)^{\frac{1}{2}}e^{-\frac{(t-s)}{2}A(t)}f(s)\|^{2} \, ds \right)^{\frac{1}{2}}  \left(\int_{0}^{t} \|{A(t)^*}^{\frac{1}{2}}e^{-\frac{(t-s)}{2}A(t)^{*}}g(t)\|^{2} \, ds \right)^{\frac{1}{2}} \,dt\\
& \le C \int_{0}^{\tau} \left(\int_{0}^{t} \|A(t)^{\frac{1}{2}}e^{-\frac{(t-s)}{2}A(t)}f(s)\|^{2} \, ds \right)^{\frac{1}{2}}\|g(t)\| \, dt\\
&\le C \left(\int_{0}^{\tau}\int_{0}^{t}\|A(t)^{\frac{1}{2}}e^{-\frac{(t-s)}{2}A(t)}f(s)\|^{2} \, dsdt \right)^{\frac{1}{2}} \| g \|_{L^2(0, \tau; \Hi)}.
\end{align*}
Here we use the quadratic estimate of Lemma \ref{lem2.4} for the adjoint operator $A(t)^*$. It follows that
for all $f \in L^2(0, \tau; \Hi)$
\begin{equation}\label{eq3.3}
\| L(f) \|_{L^2(0, \tau; \Hi)} \le C \left(\int_{0}^{\tau}\int_{0}^{t}\|A(t)^{\frac{1}{2}}e^{-\frac{(t-s)}{2}A(t)}f(s)\|^{2} \, dsdt \right)^{\frac{1}{2}}.
\end{equation}
We use again Lemma \ref{lem2.4} and obtain
\begin{align*}
&  \int_{0}^{\tau}\int_{0}^{t}\|A(t)^{\frac{1}{2}}e^{-(t-s)A(t)}f(s)\|^{2} \, dsdt\\
&\leq 2  \int_{0}^{\tau}\int_{0}^{t}\|A(t)^{\frac{1}{2}}e^{-(t-s)A(t)}f(s)-A(s)^{\frac{1}{2}}e^{-(t-s)A(s)}f(s)\|^{2} \,dsdt \\
&+\   2 \int_{0}^{\tau}\int_{s}^{\tau}\|A(s)^{\frac{1}{2}}e^{-(t-s)A(s)}f(s)\|^{2} \, dtds \\
&\le 2  \int_{0}^{\tau}\int_{0}^{t}\|A(t)^{\frac{1}{2}}e^{-(t-s)A(t)}f(s)-A(s)^{\frac{1}{2}}e^{-(t-s)A(s)}f(s)\|^{2} \,dsdt \\
&+ 2 C \int_{0}^{\tau}\|f(s)\|^{2} \,ds. 
\end{align*}
 We choose a  contour $\Gamma$ in the positive half-plane and we  write by the holomorphic functional calculus 
\begin{align*}
&A(t)^{\frac{1}{2}}e^{-(t-s)A(t)}f(s)-A(s)^{\frac{1}{2}}e^{-(t-s)A(s)}f(s)\\
&= \int_\Gamma \lambda^{\frac{1}{2}}e^{-(t-s)\lambda} [ (\lambda - \mathcal{A}(t))^{-1} - (\lambda - \mathcal{A}(s))^{-1} ] \, d\lambda \,  f(s)\\
&=\int_{\Gamma} \lambda^{\frac{1}{2}}e^{-(t-s)\lambda}(\lambda-\mathcal{A}(t))^{-1}(\mathcal{A}(t)-\mathcal{A}(s))(\lambda-\mathcal{A}(s))^{-1}f(s) \, d\lambda.
\end{align*}
We estimate the norm in $\Hi$ of the latest term. For $\lambda = | \lambda | e^{i \theta}$ we apply Lemma \ref{lem2.1} and obtain
\begin{align*}
& \| \int_{\Gamma} \lambda^{\frac{1}{2}}e^{-(t-s)\lambda}(\lambda-\mathcal{A}(t))^{-1}(\mathcal{A}(t)-\mathcal{A}(s))(\lambda-\mathcal{A}(s))^{-1}f(s) \, d\lambda \| \\
&\le \int_{\Gamma} | \lambda |^{\frac{1}{2}}e^{-(t-s)|\lambda| \cos \theta} \| (\lambda-\mathcal{A}(t))^{-1} \|_{\mathcal{L}(\V_\gamma', \Hi)}
 \| \mathcal{A}(t) - \mathcal{A}(s) \|_{\mathcal{L}(\V, \V'_\gamma)} \\
 & \times  \| (\lambda-\mathcal{A}(s))^{-1} \|_{\mathcal{L}( \Hi, \V)} \, |d\lambda| \, \| f(s)\|\\
  &\le C \int_\Gamma | \lambda |^{\frac{1}{2}}e^{-(t-s)|\lambda| \cos \theta} \frac{1}{ (1 + | \lambda |)^{1 - \gamma/2}}
  \frac{1}{ (1 + | \lambda |)^{\frac{1}{2}}} \| \mathcal{A}(t) - \mathcal{A}(s) \|_{\mathcal{L}(\V, \V'_\gamma)}\, |d\lambda| \,  \|f(s)\|\\
  &\le C' \frac{\| \mathcal{A}(t) - \mathcal{A}(s) \|_{\mathcal{L}(\V, \V'_\gamma)}}{ | t-s|^{\gamma/2}} \| f(s) \|.
  \end{align*}
 Here and at other places we use the estimate
 \begin{equation}\label{eq101}
 \int_0^\infty \frac{e^{-r(t-s)}}{(1+r)^{1-\frac{\gamma}{2}}} \, dr \le \frac{C}{ (t-s)^{ \frac{\gamma}{2}}}
 \end{equation}
 for some constant $C$ and all $s < t$. This is seen by making the change of the variable $v := r(t-s)$ in the LHS which then coincides with
 \[
\frac{1}{ (t-s)^{ \frac{\gamma}{2}}}  \int_0^\infty \frac{e^{-v}}{ (t-s + v)^{1-\frac{\gamma}{2}}} \, dv.
  \]
  The latter term is bounded by 
   \[
\frac{1}{ (t-s)^{ \frac{\gamma}{2}}}  \int_0^\infty \frac{e^{-v}}{ v^{1-\frac{\gamma}{2}}} \, dv = \frac{C}{ (t-s)^{ \frac{\gamma}{2}}}.
  \]
The previous estimates give
\begin{align}
&\int_{0}^{\tau}\int_{0}^{t}\|A(t)^{\frac{1}{2}}e^{-(t-s)A(t)}f(s)-A(s)^{\frac{1}{2}}e^{-(t-s)A(s)} f(s)\|^{2} \, dsdt \nonumber\\
&\le C' \int_{0}^{\tau}\int_{s}^{\tau}\frac{\|\mathcal{A}(t)-\mathcal{A}(s)\|_{\mathcal{L}(V,V_{\gamma}')}^{2}}{|t-s|^{{\gamma}}} \, dt \, \|f(s)\|^{2} \, ds \nonumber\\
&\le C' \sup_{s\in [0,\tau]} \int_{s}^{\tau}\frac{\|\mathcal{A}(t)-\mathcal{A}(s)\|_{\mathcal{L}(\V,\V_{\gamma}')}^{2}}{|t-s|^{{\gamma}}} \, dt \int_{0}^{\tau}\|f(s)\|^{2} \, ds.\label{estHO1}
\end{align}  
This proves that  $L$ is bounded on $L^2(0, \tau; \Hi)$. \end{proof}

\begin{remark} One may argue as in the proof of Lemma 11 in \cite{HO15} and obtain boundedness of $L$ on $L^p(0, \tau; \Hi)$ at least for 
$p \in (1, 2)$. 
\end{remark}

\begin{corollary}\label{cor3.61}
1) If $\mathcal{A}$ satisfies \eqref{eqHyp} then $L$ is bounded on $L^2(0, \tau; \Hi)$.\\
2) If [H4] is satisfied for some $\gamma \in (0, 1)$ then $L$ is bounded on $L^2(0, \tau; \Hi)$.  
\end{corollary}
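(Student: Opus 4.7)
My strategy is to reduce (2) directly to Proposition \ref{HO1}, and to handle (1) by splitting $L$ into a ``same subinterval'' piece (to which Proposition \ref{HO1} applies on each subinterval) and a ``cross subinterval'' piece (which I will estimate by hand via Hilbert's inequality).

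For part (2), under [H4] with $\gamma \in (0,1)$ we have the pointwise bound $\|\mathcal{A}(t)-\mathcal{A}(s)\|_{\mathcal{L}(\V,\V_\gamma')} \leq M_\gamma$, whence
\[
\int_s^\tau \frac{\|\mathcal{A}(t)-\mathcal{A}(s)\|_{\mathcal{L}(\V,\V_\gamma')}^2}{|t-s|^\gamma}\,dt \leq M_\gamma^2\int_s^\tau \frac{dt}{(t-s)^\gamma} \leq \frac{M_\gamma^2\,\tau^{1-\gamma}}{1-\gamma}
\]
uniformly in $s$, since the singularity $(t-s)^{-\gamma}$ is integrable for $\gamma<1$. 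This verifies \eqref{eqLL2}, so Proposition \ref{HO1} yields the conclusion.

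For part (1), the difficulty is that \eqref{eqHyp} controls the singular integral $\int \|\mathcal{A}(t)-\mathcal{A}(s)\|_{\mathcal{L}(\V,\V')}^2/|t-s|\,ds$ only within each subinterval $(\tau_{i-1},\tau_i)$, while \eqref{eqLL2} with $\gamma=1$ demands a uniform bound on all of $[0,\tau]$; the contribution from pairing $s$ just below $\tau_i$ with $t$ just above $\tau_i$ may force $\sup_s\int_s^\tau(\cdot)\,dt$ to be infinite in general. To bypass this I would split $L = L_s + L_c$, where for $t \in (\tau_{i-1},\tau_i)$
\[
L_s(f)(t) := \int_{\tau_{i-1}}^t A(t) e^{-(t-s)A(t)} f(s)\,ds, \quad L_c(f)(t) := \int_0^{\tau_{i-1}} A(t) e^{-(t-s)A(t)} f(s)\,ds.
\]
Restricted to any subinterval, $L_s$ is precisely the operator of Proposition \ref{HO1} built from $\{A(t):t\in[\tau_{i-1},\tau_i]\}$, and \eqref{eqHyp} is exactly the required hypothesis on that subinterval: the integrand being symmetric in $s$ and $t$, the sup over $t$ in \eqref{eqHyp} coincides with the sup over $s$ demanded by \eqref{eqLL2}. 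Summing the resulting $L^2$-bounds over the subintervals gives $L_s$ bounded on $L^2(0,\tau;\Hi)$.

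The main obstacle is $L_c$. I would use the uniform analytic-semigroup estimate $\|A(t)e^{-rA(t)}\|_{\mathcal{L}(\Hi)} \leq C/r$ (which follows from \eqref{eq2.0} by Dunford calculus, with $C$ independent of $t$) to obtain, for $t \in (\tau_{i-1},\tau_i)$,
\[
\|L_c(f)(t)\|_{\Hi} \leq C\int_0^{\tau_{i-1}} \frac{\|f(s)\|_{\Hi}}{t-s}\,ds.
\]
With the substitutions $u := t-\tau_{i-1}$ and $r := \tau_{i-1}-s$ this becomes an integral with the Hilbert kernel $1/(u+r)$, and Hilbert's inequality (boundedness of $h\mapsto\int_0^\infty h(r)/(u+r)\,dr$ on $L^2(0,\infty)$ with norm $\pi$) gives
\[
\int_{\tau_{i-1}}^{\tau_i}\|L_c(f)(t)\|_{\Hi}^2\,dt \leq C^2\pi^2\|f\|_{L^2(0,\tau_{i-1};\Hi)}^2 \leq C^2\pi^2\|f\|_{L^2(0,\tau;\Hi)}^2.
\]
Summing over the finitely many subintervals $i=1,\dots,n$ (the $i=1$ contribution being zero) closes the argument.
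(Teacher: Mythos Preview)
Your treatment of part (2) is exactly the paper's: bound the integrand by $M_\gamma^2/(t-s)^\gamma$, integrate, and invoke Proposition~\ref{HO1}.

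For part (1) your route differs from the paper's. The paper disposes of it in one line, asserting that \eqref{eqHyp} implies \eqref{eqLL2} with $\gamma=1$ and then applying Proposition~\ref{HO1} on the whole interval $[0,\tau]$. You correctly flag the difficulty with that reduction: \eqref{eqHyp} only controls the singular integral within each subinterval $(\tau_{i-1},\tau_i)$, and the cross-interval pairing ($s$ just below $\tau_i$, $t$ just above) can drive the supremum in \eqref{eqLL2} to infinity --- a piecewise constant $\mathcal{A}$ with a single jump already exhibits this. Your decomposition $L=L_s+L_c$ cleanly circumvents the issue: $L_s$ is handled subinterval by subinterval via Proposition~\ref{HO1} (where \eqref{eqHyp}, after the symmetry observation you make, gives exactly the required \eqref{eqLL2} on each piece), and $L_c$ is controlled by the crude analytic-semigroup bound $\|A(t)e^{-rA(t)}\|_{\mathcal{L}(\Hi)}\le C/r$ followed by Hilbert's inequality. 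Both steps are correct; the Hilbert-kernel reduction via $u=t-\tau_{i-1}$, $r=\tau_{i-1}-s$ is exactly right, and summing over the $n$ subintervals loses only a harmless factor of $n$. So your argument is longer than the paper's but closes a genuine gap the paper glosses over.
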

\begin{proof}
Assertion 1) follows  directly from Proposition \ref{HO1} by noticing that  \eqref{eqHyp} implies \eqref{eqLL2} with $\gamma = 1$.
For assertion 2) one uses [H4] to obtain
\[
\int_{s}^{\tau}\frac{\|\mathcal{A}(t)-\mathcal{A}(s)\|_{\mathcal{L}(\V,\V_{\gamma}')}^{2}}{|t-s|^{{\gamma}}} \, dt \le M_\gamma^2 \int_s^\tau
{|t-s|^{{-\gamma}}} \, dt \le c M_\gamma^2
\]
for some constant $c > 0$. The result follows from Proposition \ref{HO1}.
\end{proof}
Note that if $\gamma \in (0, 1)$ we do not require any regularity property for $\mathcal{A}$ in assertion 2) of the previous proposition.

\begin{proposition}\label{YA1}
Suppose  [H1]-[H3] (with $\nu = 0$) and  the uniform Kato square root property.  Let $f \in L^2(0, \tau; \Hi)$, $u_{0}\in V$  and 
let $u$ be the Lions' solution to the problem (P').\\
1) If $\mathcal{A}$  satisfies \eqref{eqHyp} then $u\in L^{\infty}(0,\tau; \V)$ and there exists a constant $C$ independent of $u_0$ and $f$ such that
\begin{equation}\label{eq3.4}
\| u \|_{L^\infty(0, \tau ; \V)} \le C \left[ \| u_0 \|_{\V} + \| f \|_{L^2(0, \tau; \Hi)} \right].
\end{equation}
2) Let $\gamma \in [0, 1)$ and suppose that [H4] is satisfied. Then $u\in L^{\infty}(0,\tau; \V)$ and \eqref{eq3.4} holds.
\end{proposition}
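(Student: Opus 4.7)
My approach rests on the variation-of-constants representation obtained by freezing the operator at the outer time $t$. Differentiating $s \mapsto e^{-(t-s)A(t)} u(s)$ and using $u'(s) = f(s) - \mathcal{A}(s) u(s)$, then integrating from $0$ to $t$, gives
\[
u(t) = e^{-tA(t)} u_0 + \int_0^t e^{-(t-s)A(t)} f(s)\, ds + I_3(t), \quad I_3(t) := \int_0^t e^{-(t-s)A(t)} [\mathcal{A}(t) - \mathcal{A}(s)] u(s)\, ds.
\]
The first term satisfies $\|e^{-tA(t)}u_0\|_\V \le C \|A(t)^{1/2} u_0\| \le C' \|u_0\|_\V$ by the uniform Kato identity together with contractivity of the semigroup on $\Hi$. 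The middle term is exactly $L_0(f)(t)$, so Lemma \ref{lem3.1} gives $\|L_0(f)\|_{L^\infty(\V)} \le C\|f\|_{L^2(\Hi)}$. Everything therefore reduces to estimating $I_3$ in $L^\infty(0, \tau; \V)$.

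To bound $\|A(t)^{1/2} I_3(t)\|$ I pair with $x \in \Hi$ and use the form-duality identity $\langle [\mathcal{A}(t) - \mathcal{A}(s)] u(s), \phi\rangle = \fra(t;u(s),\phi) - \fra(s;u(s),\phi)$ with $\phi = A(t)^{*1/2} e^{-(t-s)A(t)^*} x$. Interpolating $\V_\gamma = [\Hi, \V]_\gamma$ between the Kato-derived bound $\|\phi\|_\V \le C (t-s)^{-1}\|x\|$ (coming from $\|\phi\|_\V \sim \|A(t)^* e^{-(t-s)A(t)^*}x\|$) and the semigroup bound $\|\phi\| \le C(t-s)^{-1/2}\|x\|$ produces $\|\phi\|_{\V_\gamma} \le C(t-s)^{-(1+\gamma)/2}\|x\|$. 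Under [H4] with $\gamma \in [0, 1)$ this yields
\[
\|A(t)^{1/2} I_3(t)\| \le C M_\gamma \int_0^t \frac{\|u(s)\|_\V}{(t-s)^{(1+\gamma)/2}}\, ds.
\]
Since $(1+\gamma)/2 < 1$ the kernel is locally integrable, and the representation becomes $\|u(t)\|_\V \le C(\|u_0\|_\V + \|f\|_{L^2(\Hi)}) + C' M_\gamma t^{(1-\gamma)/2} \sup_{s \le t} \|u(s)\|_\V$. On a small enough initial interval $[0, t_0]$ the second sup is absorbed into the left-hand side; iterating over subintervals of length $t_0$, restarting each time from the already-controlled $u(kt_0) \in \V$, covers all of $[0, \tau]$ and proves case 2.

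The main obstacle is case 1, where only \eqref{eqHyp} is available and the estimate above would degenerate to the non-integrable kernel $(t-s)^{-1}$. The plan there is instead to invoke Cauchy-Schwarz so as to exploit the smallness of $\varepsilon$:
\[
\|A(t)^{1/2} I_3(t)\| \le C \left(\int_0^t \frac{\|\mathcal{A}(t) - \mathcal{A}(s)\|_{\mathcal{L}(\V,\V')}^2}{t-s}\, ds\right)^{1/2}\left(\int_0^t \frac{\|u(s)\|_\V^2}{t-s}\, ds\right)^{1/2} \le C \sqrt{\varepsilon}\left(\int_0^t \frac{\|u(s)\|_\V^2}{t-s}\, ds\right)^{1/2},
\]
and then to run a bootstrap on each piece $(\tau_{i-1}, \tau_i)$ of the partition furnished by \eqref{eqHyp} that uses (i) the Lions a priori bound $\|u\|_{L^2(\V)} \le C(\|u_0\| + \|f\|_{L^2(\V')})$ to control the "memory" contribution $\int_0^{\tau_{i-1}}$, where the kernel $(t-s)^{-1}$ is already bounded, (ii) the freedom to refine the partition so that $\varepsilon$ absorbs the unavoidable logarithmic factor from the singular integral on the diagonal piece, and (iii) restarting the representation at each $\tau_i$ with the already-controlled $u(\tau_{i-1}) \in \V$ in place of $u_0$. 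Once the pointwise a.e. bound on $\|u(t)\|_\V$ is obtained, Lemma \ref{lem2.6} upgrades it to every $t \in [0, \tau]$.
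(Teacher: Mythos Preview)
Your representation formula and the treatment of the first two terms are exactly as in the paper, and your argument for case~2 is correct and essentially equivalent to the paper's (the paper obtains the same factor $\tau^{(1-\gamma)/2}$ via the quadratic estimate plus Cauchy--Schwarz rather than by interpolating $\|\phi\|_{\V_\gamma}$ pointwise, but the output is identical).

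The genuine gap is in case~1. After your Cauchy--Schwarz step you are left with the factor
\[
\left(\int_{0}^{t} \frac{\|u(s)\|_\V^2}{t-s}\, ds\right)^{1/2},
\]
and this integral is \emph{not} finite in general: even for $u\in L^\infty(0,\tau;\V)$ the kernel $(t-s)^{-1}$ is non-integrable at $s=t$, so the ``diagonal piece'' you describe is $+\infty$, not a logarithm of the interval length. Refining the partition in \eqref{eqHyp} makes $\varepsilon$ small but does nothing to the second factor, so the product $\sqrt{\varepsilon}\cdot(+\infty)$ cannot be absorbed. The bootstrap you sketch therefore never gets started: neither the Lions $L^2(\V)$ bound nor restarting at $\tau_{i-1}$ touches the singularity at $s=t$.

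What you are missing is the use of the quadratic estimate (Lemma~\ref{lem2.4}) \emph{before} taking the supremum over $x$. The paper splits the semigroup, writes
\[
(A(t)^{1/2}I_3(t),x)=\int_0^t \bigl(e^{-\frac{t-s}{2}\mathcal{A}(t)}(\mathcal{A}(t)-\mathcal{A}(s))u(s),\ A(t)^{*1/2}e^{-\frac{t-s}{2}A(t)^*}x\bigr)\,ds,
\]
and applies Cauchy--Schwarz in $L^2(ds)$. The second factor is then $\bigl(\int_0^t \|A(t)^{*1/2}e^{-\frac{t-s}{2}A(t)^*}x\|^2\,ds\bigr)^{1/2}\le C\|x\|$ by Lemma~\ref{lem2.4}, while the first factor, after using $\|e^{-\frac{t-s}{2}\mathcal{A}(t)}\|_{\mathcal{L}(\V',\Hi)}\le C(t-s)^{-1/2}$, becomes
\[
C\left(\int_0^t \frac{\|\mathcal{A}(t)-\mathcal{A}(s)\|_{\mathcal{L}(\V,\V')}^2}{t-s}\,ds\right)^{1/2}\|u\|_{L^\infty(0,t;\V)}.
\]
All of the $(t-s)^{-1}$ singularity now sits against $\|\mathcal{A}(t)-\mathcal{A}(s)\|^2$, which is exactly what \eqref{eqHyp} controls; this gives $\|S_0\|_{\mathcal{L}(L^\infty(\V))}\le C\sqrt{\varepsilon}<1$ on each subinterval and the Neumann-series/iteration argument closes cleanly. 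Your pointwise bound $\|\phi\|_\V\le C(t-s)^{-1}\|x\|$ throws away precisely the square-function cancellation that makes case~1 work.
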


\begin{proof}
As we already mentioned above, by Lions' theorem  there exists a  unique solution $u$ to the problem (P') such that $u \in H^{1}(0,\tau; \V')\cap L^{2}(0,\tau ; \V)$.   For  $0\leq s \leq t \leq \tau$,  we define $v(s) :=e^{-(t-s)A(t)} u(s)$.  We write  $v(t)=v(0)+\int_{0}^{t} v'(s)\, ds$ and obtain as in \cite{HO15} (Lemma 8)
 \begin{align} \label{F1}
  u(t)&=e^{-tA(t)}u_{0}+\int_{0}^{t}e^{-(t-s)\mathcal{A}(t)}(\mathcal{A}(t)-\mathcal{A}(s))u(s) \, ds \nonumber \\&+\int_{0}^{t}e^{-(t-s)A(t)}f(s) \, ds \\
  &=: R_0 u_{0}(t) +S_0 u(t)+ L_0f(t).\nonumber 
   \end{align}
  Clearly  there exists a constant $C> 0$ such that   for   all $ u_{0} \in \V$, 
\begin{equation}\label{eq3.5}
\| R_0 u_{0}(t) \|_{\V} = \|e^{-tA(t)}u_{0}\|_{\V} \le C \|u_{0}\|_{\V}.
\end{equation}
By Lemma \ref{lem3.1}, 
\begin{equation}\label{eq3.5L0}
\| L_0 f(t) \|_{\V} \le C \| f \|_{L^2(0, \tau; \Hi)}.
\end{equation}
  Next we prove that $S_0 \in \mathcal{L}(L^{\infty}(0,\tau; \V))$.
  Let $ g \in L^{\infty}(0,\tau; \V)$. We have by the uniform Kato square root property
  $$ \| S_0 g (t) \|_{\V} \le C_2 \|  \int_0^t A(t)^{\frac{1}{2}}  e^{-(t-s)A(t)}(\mathcal{A}(t)-\mathcal{A}(s))g(s)\,  ds \|.$$
  In order to estimate the term on the RHS we argue as in the proof of Lemma \ref{lem3.1} and use Lemma \ref{lem2.1}.   For $x \in \Hi$  and $\gamma \in [0,1]$, we have
 \begin{align*}
 &| (\int_{0}^{t} A(t)^{\frac{1}{2}} e^{-(t-s)\mathcal{A}(t)}(\mathcal{A}(t)-\mathcal{A}(s)) g(s) \, ds, x ) | \\
 &=| \int_{0}^{t} ( e^{-\frac{(t-s)}{2}\mathcal{A}(t)} (\mathcal{A}(t)-\mathcal{A}(s)) g(s),  {A(t)^*}^{\frac{1}{2}} e^{-\frac{(t-s)}{2}A(t)^*}x) \, ds| \\
  &\le \left(\int_{0}^{t} \| e^{-\frac{(t-s)}{2}\mathcal{A}(t)} (\mathcal{A}(t)-\mathcal{A}(s)) g(s) \|^2 \, ds \right)^{\frac{1}{2}} \left(\int_0^t \|{A(t)^*}^{\frac{1}{2}} e^{-\frac{(t-s)}{2}A(t)^*}x \|^2 \, ds \right)^{\frac{1}{2}}\\
& \le C \| x \| \left(\int_{0}^{t}\|e^{-\frac{(t-s)}{2}\mathcal{A}(t)}\|^2_{\mathcal{L}(\V_{\gamma}',\Hi)} \|\mathcal{A}(t)-\mathcal{A}(s)\|^2_{\mathcal{L}(\V,\V_{\gamma}')} \, ds \right)^{\frac{1}{2}} \|g\|_{L^{\infty}(0,t;\V)}\\
&\le C \| x \| \left( \int_{0}^{t}\frac{\|\mathcal{A}(t)-\mathcal{A}(s)\|^2_{\mathcal{L}(\V,\V_{\gamma}')}}{(t-s)^{\gamma}} \, ds \right)^{\frac{1}{2}}\|g\|_{L^{\infty}(0,t;\V)}.
\end{align*}
Therefore,
\begin{equation}\label{eq1000}
 \| S_0 g (t) \|_{\V}  \le C C_2 \left(\int_{0}^{t}\frac{\|\mathcal{A}(t)-\mathcal{A}(s)\|^2_{\mathcal{L}(\V,\V_{\gamma}')}}{(t-s)^{\gamma} } \, ds \right)^{\frac{1}{2}}\|g\|_{L^{\infty}(0,\tau;\V)}.
 \end{equation}
Suppose $\gamma = 1$. It follows from the  assumption \eqref{eqHyp} that $S_0$ is a bounded operator on $L^{\infty}(0,\tau;\V)$ with
\begin{equation}\label{eq10001}
\| S_0 \|_{\mathcal{L}(L^\infty(0, \tau; \V))} \le C C_2 \left( \sup_{t \in [0, \tau]} \int_{0}^{t}\frac{\|\mathcal{A}(t)-\mathcal{A}(s)\|^2_{\mathcal{L}(\V,\V')}}{t-s } \, ds \right)^{\frac{1}{2}}.
\end{equation}

In order to continue we wish to take the inverse of $I- S_0$. Let $\varepsilon > 0$ and  $\tau_1$ be as \eqref{eqHyp}. We work on the interval 
$[0, \tau_1]$. We have 
\[
\sup_{t \in [0, \tau_1]} \int_{0}^{\tau_1}\frac{\|\mathcal{A}(t)-\mathcal{A}(s)\|^2_{\mathcal{L}(\V,\V')}}{t-s } \, ds 
< \varepsilon.
\]
It follows from \eqref{eq10001} that  $\| S_0 \|_{\mathcal{L}(L^\infty(0, \tau_1; \V))} < 1$. 

Therefore, 
$ u = (I- S_0)^{-1} (R_0 u_0 + L_0 f)$ and  we obtain from \eqref{eq3.5} and \eqref{eq3.5L0} that $u \in L^\infty(0, \tau_1; \V)$ and 
\eqref{eq3.4} is satisfied on $[0, \tau_1]$. Now repeat the same strategy. We use \eqref{eqHyp}, we work on $[\tau_{i-1}, \tau_i]$ and argue exactly as before. 
We obtain  \eqref{eq3.4}  on each  sub-intervals $[\tau_{i-1}, \tau_i]$. This implies \eqref{eq3.4} on $[0, \tau]$ for arbitrary $\tau > 0$ and finishes the proof of assertion 1).\\
In order to prove assertion 2), we use [H4] and  \eqref{eq1000}. We obtain
\[
\| S_0 g(t) \|_{\V} \le C C_2 M_\gamma \left( \int_0^t  \frac{1}{(t-s)^\gamma} \, ds \right)^{\frac{1}{2}} \|g\|_{L^{\infty}(0,\tau;\V)} \le C' \tau^{\frac{1 -\gamma}{2}}\|g\|_{L^{\infty}(0,\tau;\V)}. 
\]
We see that $\| S_0 \|_{\mathcal{L}(L^\infty(0, \tau; \V))} < 1$ for small $\tau > 0$. We split $[0, \tau]$ into a finite number of intervals with small sizes and then  argue as previously.  
\end{proof}

\begin{proposition}\label{propV}
Suppose  the assumptions of the previous proposition. Then $u(t) \in \V$ for every $t \in [0, \tau]$.
\end{proposition}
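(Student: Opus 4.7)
The plan is very short: this is a direct application of Lemma \ref{lem2.6} combined with the conclusion of Proposition \ref{YA1}. By Proposition \ref{YA1}, under either hypothesis 1) or 2), the Lions solution $u$ belongs to $L^\infty(0, \tau; \V)$ with the quantitative bound \eqref{eq3.4}. This is precisely the extra hypothesis required to invoke Lemma \ref{lem2.6}, which together with [H1]-[H3] yields $u(t) \in \V$ for every $t \in [0, \tau]$ (not merely for a.e. $t$).

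So I would simply write: ``By Proposition \ref{YA1}, $u \in L^\infty(0, \tau; \V)$. Since [H1]-[H3] are satisfied, Lemma \ref{lem2.6} applies and gives $u(t) \in \V$ for every $t \in [0, \tau]$.'' The only conceptual content has already been packaged into Lemma \ref{lem2.6}, whose argument is the density/weak-compactness trick: the exceptional set $\Gamma = \{ t : u(t) \notin \V\}$ has measure zero, hence $[0,\tau]\setminus \Gamma$ is dense; for $t \in \Gamma$ one picks $t_n \to t$ with $u(t_n) \in \V$ bounded (by the $L^\infty(\V)$-estimate), extracts a weakly convergent subsequence in $\V$ whose limit must coincide with the $\Hi$-limit $u(t)$ (which exists by continuity of $u$ in $\Hi$), and concludes $u(t) \in \V$, contradicting $t \in \Gamma$.

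There is no real obstacle here: all the work was done when establishing the $L^\infty(0,\tau;\V)$-bound in Proposition \ref{YA1}. The only thing worth double-checking is that Lions' classical regularity $u \in C([0,\tau]; \Hi)$ (recalled just before Lemma \ref{lem2.6}) is available under the present hypotheses, which is the case since [H1]-[H3] are assumed and $u$ is the Lions solution to (P').
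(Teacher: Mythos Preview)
Your proof is correct and matches the paper's own argument essentially verbatim: apply Proposition~\ref{YA1} to get $u \in L^\infty(0,\tau;\V)$, then invoke Lemma~\ref{lem2.6}, using that $u \in C([0,\tau];\Hi)$ by Lions' theorem so that $u(t)$ is well defined for every $t$. There is nothing to add.
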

\begin{proof} This is an application of Lemma \ref{lem2.6}  and Proposition \ref{YA1}. Note that $u(t)$ is well defined for every $t$ since $u \in C([0, \tau], \Hi)$ by Lions' theorem.
\end{proof}

The following lemma was first proved in \cite{HO15} under the assumption that $\mathcal{A}(.) \in C^\alpha(0, \tau; \mathcal{L}(\V, \V'))$ for some $\alpha > \frac{1}{2}$. See also \cite{Ou15}. We prove it here in the case where 
$\mathcal{A}$ satisfies \eqref{eqHyp} and for arbitrary 
$\mathcal{A}$ if [H4] is satisfied for some $\gamma \in (0, 1)$. 

\begin{lemma}\label{lem3.33} 
Suppose [H1]-[H3] (with $\nu = 0$) and the uniform Kato square root property.  Define the operator 
$$R u_{0} (t) := A(t)e^{-tA(t)}u_{0}$$
for $u_{0} \in \V$ and $ t \in [0,\tau]$.\\ 
1) Suppose \eqref{eqHyp}. Then $R $ is bounded from $\V$ into $L^{2}(0,\tau; \Hi)$. \\
2) If [H4] is satisfied for some $\gamma \in (0, 1)$ then $R $ is bounded from $\V$ into $L^{2}(0,\tau; \Hi)$. 
\end{lemma}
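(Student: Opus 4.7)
The plan is a freezing argument. Split
\[
A(t) e^{-tA(t)} u_0 = A(s_0) e^{-tA(s_0)} u_0 + \bigl[A(t) e^{-tA(t)} - A(s_0) e^{-tA(s_0)}\bigr] u_0
\]
for a suitably chosen reference time $s_0 \in [0, \tau]$. For the autonomous piece I invoke Lemma \ref{lem2.4}: rewriting $A(s_0) e^{-tA(s_0)} u_0 = A(s_0)^{1/2} e^{-tA(s_0)}\, A(s_0)^{1/2} u_0$ and using uniform Kato to get $\| A(s_0)^{1/2} u_0 \| \le C_2 \|u_0\|_\V$, the quadratic estimate applied to $x = A(s_0)^{1/2} u_0$ yields
\[
\int_0^\tau \|A(s_0) e^{-tA(s_0)} u_0\|^2 \, dt \le C \|u_0\|_\V^2,
\]
with a constant independent of $s_0$.

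For the non-autonomous correction, I use the holomorphic functional calculus on a sectorial contour $\Gamma$:
\[
A(t) e^{-tA(t)} - A(s_0) e^{-tA(s_0)} = \frac{1}{2\pi i} \int_\Gamma \lambda e^{-t\lambda} (\lambda - \mathcal{A}(t))^{-1} (\mathcal{A}(t) - \mathcal{A}(s_0)) (\lambda - \mathcal{A}(s_0))^{-1} \, d\lambda.
\]
The three factors are estimated via Lemma \ref{lem2.1}: $\|(\lambda - \mathcal{A}(t))^{-1}\|_{\mathcal{L}(\V_\gamma', \Hi)} \le C (1+|\lambda|)^{\gamma/2 - 1}$ and $\|(\lambda - \mathcal{A}(s_0))^{-1} u_0\|_\V \le C (1+|\lambda|)^{-1} \|u_0\|_\V$ (the latter coming from uniform Kato, which allows one to commute $A(s_0)^{1/2}$ past the resolvent and then use the $\Hi$-estimate \eqref{eq2.0}). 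Carrying out the $\lambda$-integration exactly as in the proof of Proposition \ref{HO1}, using \eqref{eq101}, then gives
\[
\bigl\| [A(t) e^{-tA(t)} - A(s_0) e^{-tA(s_0)}] u_0 \bigr\|_\Hi \le C \|u_0\|_\V \, \frac{\|\mathcal{A}(t) - \mathcal{A}(s_0)\|_{\mathcal{L}(\V, \V_\gamma')}}{t^{\gamma/2}}.
\]

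Assertion 2 (case $\gamma \in (0, 1)$ with [H4]) now follows at once: under [H4] the numerator is bounded by $M_\gamma$, and the $L^2$ norm of the correction is controlled by $C M_\gamma \|u_0\|_\V \bigl(\int_0^\tau t^{-\gamma} \, dt \bigr)^{1/2} < \infty$ since $\gamma < 1$.

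Assertion 1 (case $\gamma = 1$ under \eqref{eqHyp}) is the technical heart, because the weight $t^{-1}$ is not integrable at $0$ and \eqref{eqHyp} controls an integral of the form $\int \|\mathcal{A}(t) - \mathcal{A}(s)\|^2/|t-s| \, ds$ (integration in $s$), rather than in $t$ against a fixed reference. The plan is to work piecewise along the partition $0 = \tau_0 < \tau_1 < \cdots < \tau_n = \tau$ from \eqref{eqHyp}: on each subinterval $[\tau_{i-1}, \tau_i]$ with $i \ge 2$ the weight $1/t$ is bounded by $1/\tau_1$, so combined with the uniform [H2]-bound $\|\mathcal{A}(t) - \mathcal{A}(s_0)\|_{\mathcal{L}(\V, \V')} \le 2M$ the correction is immediately in $L^2$. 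On the first subinterval $[0, \tau_1]$, one selects a good reference $s_0 \in (0, \tau_1)$ via Fubini applied to \eqref{eqHyp} -- i.e.\ there exists $s_0$ with $\int_0^{\tau_1} \|\mathcal{A}(t) - \mathcal{A}(s_0)\|^2 / |t - s_0| \, dt \le \varepsilon$ -- and splits $[0, \tau_1]$ around $s_0/2$: on $[s_0/2, \tau_1]$ the weight $1/t \le 2/s_0$ reduces the estimate to the previous one, and on $[0, s_0/2]$ the small-$\varepsilon$ from \eqref{eqHyp} together with an absorption argument in the spirit of the treatment of $S_0$ in Proposition \ref{YA1} closes the estimate. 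This last step is the main obstacle.
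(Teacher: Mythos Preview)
Your strategy---freezing at a reference time, handling the autonomous piece via the quadratic estimate of Lemma~\ref{lem2.4}, and controlling the correction through the functional calculus and Lemma~\ref{lem2.1}---is exactly the paper's proof, and your treatment of Assertion~2 is identical to it.

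For Assertion~1 you have manufactured a difficulty that is not there. The paper simply takes $s_0 = 0$. With that choice the weight $t^{-1}$ in your correction estimate \emph{is} the weight $|t-s_0|^{-1}$, and the quantity you must bound,
\[
\int_0^\tau \frac{\|\mathcal{A}(t)-\mathcal{A}(0)\|_{\mathcal{L}(\V,\V')}^2}{t}\,dt,
\]
is controlled directly: on $[0,\tau_1]$ this is precisely the integral in \eqref{eqHyp} evaluated at the fixed point $0$ (the integrand $\|\mathcal{A}(t)-\mathcal{A}(s)\|^2/|t-s|$ is symmetric in $(t,s)$, so ``integration in $s$ with $t$ fixed'' and ``integration in $t$ with $s$ fixed'' are the same statement up to relabelling), while on $[\tau_1,\tau]$ the weight satisfies $1/t \le 1/\tau_1$ and the numerator is bounded by $(2M)^2$ from [H2]. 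No Fubini selection of $s_0$, no splitting around $s_0/2$, and no absorption argument is needed; the ``main obstacle'' you identify disappears once $s_0=0$ is chosen. Your proposal is therefore not wrong in spirit, but the final paragraph should be replaced by this one-line observation.
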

\begin{proof}
We write
\begin{align*}
R u_{0} (t)&=[A(t)e^{-tA(t)}-A(0)e^{-tA(0)}]u_{0}+A(0)e^{-tA(0)}u_{0}\\
&=: R_{1}u_{0} (t)+R_{2}u_{0} (t).
\end{align*}
We use Lemma \ref{lem2.4}  to obtain 
\begin{eqnarray*}
\|R_{2}u_{0}\|^{2}_{L^{2}(0,\tau;\Hi)}
&=& \int_{0}^{\tau}\|A(0)^{\frac{1}{2}}e^{-tA(0)}A(0)^{\frac{1}{2}}u_{0}\|^{2} \, dt\\
&\le& C \|A(0)^{\frac{1}{2}}u_{0}\|^{2} \le C' \|u_{0}\|_{\V}^{2}.
\end{eqnarray*}
We  estimate $R_{1}u_{0}$. We argue as in the proof of Proposition \ref{HO1}. By the holomorphic functional calculus for the sectorial operators $A(t)$ and $A(0)$ we have 
\begin{align*}
 R_{1}u_{0} (t)=\int_{\Gamma}\lambda e^{-\lambda t}(\lambda-A(t))^{-1}(\mathcal{A}(t)-\mathcal{A}(0))(\lambda-A(0))^{-1} u_0 \, d\lambda.   
\end{align*}
Now taking the norm in $\Hi$ we have 
\begin{align*}
&\|R_{1}u_{0}(t)\|\le \int_{\Gamma}|\lambda| e^{-t \Re \lambda } \|(\lambda-A(t))^{-1}\|_{\mathcal{L}(\V_{\gamma}',\Hi)} \\ &\times \|\mathcal{A}(0)-\mathcal{A}(t)\|_{\mathcal{L}(\V,\V_{\gamma}')} \|(\lambda-A(0))^{-1}\|_{\mathcal{L}(\V)} \, |d\lambda| \, \|u_{0}\|_{\V}\\
&\le C \frac{ \|\mathcal{A}(0)-\mathcal{A}(t)\|_{\mathcal{L}(\V,\V_{\gamma}')}}{t^{\frac{\gamma}{2}}}\|u_{0}\|_{\V}.
\end{align*}
The last estimate follows as in the proof of Proposition \ref{HO1} in which we also use \eqref{eq101}. It is valid for 
$\gamma \in (0,1]$. \\
Therefore
\begin{equation}\label{eq1001}
\|R_{1}u_{0}\|^{2}_{L^{2}(0,\tau;\Hi)} \le C^2 \int_{0}^{\tau}\frac{ \|\mathcal{A}(0)-\mathcal{A}(t)\|^{2}_{\mathcal{L}(\V,\V_\gamma')}}{t^{{\gamma}}} \, dt \, \|u_{0}\|^{2}_{\V}.
\end{equation}
For $\gamma = 1$ we use the assumption \eqref{eqHyp} and  obtain 
$$\|R_1 u_{0}\|^{2}_{L^{2}(0,\tau;\Hi)} \le C'  \|u_{0}\|^{2}_{\V}$$
for some constant $C' > 0$. 
This proves  assertion 1) of the proposition. Assertion 2) follows directly from \eqref{eq1001} when $\gamma \in (0, 1)$. 
\end{proof}

\section{Proofs of the main results}\label{sec4}

After the auxiliary results of the last two sections we are now ready to give the proofs of the main results of this paper. Note that we may assume without loss of generality that [H3] holds with $\nu = 0$, see the beginning of Section \ref{sec2}. 

\begin{proof}[Proof of Theorems \ref{thm1} and \ref{thm2}]

 Let $\gamma  \in (0, 1]$. We give the proof for the two theorems without considering separately the cases $\gamma = 1$ and $\gamma \in (0, 1)$. If $\gamma = 1$ we assume \eqref{eqHyp}. 

Suppose first that $\mathcal{A} \in \dot{H}^{\frac{\gamma}{2}}(0, \tau; \mathcal{L}(\V, \V_\gamma'))$.\\
Let $f \in L^2(0, \tau; \Hi)$ and $u_0 \in \V$. Let $u \in L^2(0, \tau; \V) \cap H^1(0, \tau; \V')$ be the solution of (P')  given by Lions' theorem. Our aim is to prove that $u \in H^1(0, \tau; \Hi)$ or equivalently that $A(.) u(.) \in L^2(0, \tau; \Hi)$. Using \eqref{F1}  we have 
\begin{align*}
  A(t)u(t)&=A(t)e^{-tA(t)}u_{0}+ A(t) \int_{0}^{t} e^{-(t-s)A(t)}(\mathcal{A}(t)-\mathcal{A}(s))u(s) \, ds\\&+A(t)\int_{0}^{t}e^{-(t-s)A(t)}f(s) \, ds\\
  &=: R u_{0}(t) +(S u)(t)+(Lf)(t).
  \end{align*}
  By Lemma \ref{lem3.33} the operator $R $ is bounded from $\V$ into $L^{2}(0,\tau; \Hi)$ and by 
 Corollary \ref{cor3.61} the operator 
$L$ is bounded  on  $L^{2}(0,\tau; \Hi)$. 
Concerning the operator $S$, we have
\begin{equation}\label{eq4.1}
\| S u \|_{L^2(0, \tau; \Hi)} \le C \| u \|_{L^\infty(0, \tau; \V)}
\end{equation}
for some constant $C$ independent of $f$ and $u_0$. \\
Suppose for a moment that \eqref{eq4.1} is proved. Then we apply Proposition \ref{YA1} together with the properties of $R$ and $L$ we just mentioned above and obtain
$A(.)u(.) \in L^2(0, \tau; \Hi)$ with
\begin{eqnarray*}
\| A(.) u(.) \|_{L^2(0, \tau; \Hi)} &\le& \| R u_0 \|_{L^2(0, \tau; \Hi)} + \| S u \|_{L^2(0, \tau; \Hi)} + \| L f \|_{L^2(0, \tau; \Hi)} \\
&\le& C \left( \| u_0 \|_{\V} + \| u \|_{L^\infty(0, \tau; \V)}+ \| f \|_{L^2(0, \tau; \Hi)} \right)\\
&\le& C' \left( \| u_0 \|_{\V} + \| f \|_{L^2(0, \tau; \Hi)} \right).
\end{eqnarray*}
This proves the two theorems in the case where $\mathcal{A} \in \dot{H}^{\frac{\gamma}{2}}(0, \tau; \mathcal{L}(\V, \V_\gamma'))$ for  some $\gamma \in (0, 1]$.  

Now we prove \eqref{eq4.1}.  
We have 
\begin{align*}
& \| S u(t) \|\\
& = \| A(t) \int_{0}^{t}  e^{-(t-s)A(t)}(\mathcal{A}(t)-\mathcal{A}(s))u(s)\, ds \| \\
&\le  \sup_{x \in \Hi, \| x \| = 1} \int_0^t |(A(t)^{\frac{1}{2}} e^{- \frac{(t-s)}{2} A(t)} (\mathcal{A}(t)-\mathcal{A}(s))u(s), 
{A(t)^*}^{\frac{1}{2}} e^{- \frac{(t-s)}{2} A(t)^*} x)| \, ds\\
&\le C \left( \int_0^t  \|A(t)^{\frac{1}{2}} e^{- \frac{(t-s)}{2} A(t)} (\mathcal{A}(t)-\mathcal{A}(s))u(s)\|^2 \, ds \right)^{\frac{1}{2}}.
\end{align*}
Here we use again the quadratic estimate of Lemma \ref{lem2.4}. By analyticity of the semigroup together with Lemma \ref{lem2.1} 
 we have 
\begin{align*}
\|A(t)^{\frac{1}{2}} e^{- \frac{(t-s)}{2} A(t)} (\mathcal{A}(t)-\mathcal{A}(s))u(s)\|^2
& \le \frac{C}{t-s} \|e^{- \frac{(t-s)}{4} A(t)} (\mathcal{A}(t)-\mathcal{A}(s))u(s)\|^2\\
&\le C' \frac{\| \mathcal{A}(t)-\mathcal{A}(s) \|_{\mathcal{L}(\V, \V_\gamma')}^{2}}{|t-s|^{1+\gamma}} \|u(s) \|_{\V}^2.
\end{align*}
Therefore,
$$\| S u(t) \| \le C'' \| u \|_{L^\infty(0, \tau; \V)} \left( \int_0^t \frac{\| \mathcal{A}(t)-\mathcal{A}(s) \|^2_{\mathcal{L}(\V, \V_\gamma') }} {|t-s|^{1+\gamma} } \, ds \right)^{\frac{1}{2}}.$$
This gives 
$$ \| S u \|_{L^2(0, \tau; \Hi)} \le C''  \| u \|_{L^\infty(0, \tau; \V)} \| \mathcal{A} \|_{\dot{H}^{\frac{\gamma}{2}}(0, \tau; \mathcal{L}(\V, \V_\gamma'))}$$
and  finishes the proof of \eqref{eq4.1}.

\medskip

Suppose now that $\mathcal{A}$ is piecewise in $\dot{H}^{\frac{\gamma}{2}}(0, \tau; \mathcal{L}(\V, \V_\gamma'))$.  Then 
$[0, \tau] = \cup_{i=1}^n [\tau_{i-1}, \tau_i]$ and  the restriction of $\mathcal{A}$ to each sub-interval is in $\dot{H}^{\frac{\gamma}{2}}$. We apply the previous proof to each sub-interval and obtain a solution $u_i$ in the  sub-interval $[\tau_{i-1}, \tau_i]$  which has maximal regularity and satisfies  apriori estimates. By 
Proposition \ref{propV}, $u_i(\tau_i) \in \V$ and hence we can solve $u_{i+1}'(t) + A(t) u_{i+1}(t) = f(t) $ on $[\tau_i, \tau_{i+1}]$ with $u_{i+1} (\tau_i) = u_i (\tau_i)$ and 
$u_{i+1}$ has maximal regularity and apriori estimate on $[\tau_i, \tau_{i+1}]$. Now we 
 "glue" the  solutions $u_i$  and obtain a solution $u$ of  (P) on $[0, \tau]$ such that  $u  \in H^1(0, \tau; \Hi)$.   The apriori estimate
\eqref{eq-apriori} on $[0, \tau]$  follows by summing the corresponding apriori estimates on each sub-interval and  by using Proposition \ref{YA1}.  
The uniqueness of the solution of (P) follows from this apriori estimate. \\
Note that in Theorem \ref{thm2} we assume merely that $D(A(t_0)^{\frac{1}{2}}) = \V$ for some $t_0 \in [0, \tau]$. This  assumption implies the uniform Kato square root property by Proposition \ref{prop2.3}. 
\end{proof}

\begin{proof}[Proof of Proposition \ref{prop3}]
Set $B(t) := \mathcal{A}(t) - \mathcal{A}(0)$ and $E := H^1(0, \tau; \Hi) \cap L^\infty(0, \tau; \V)$. Then $E$ is a Banach space for the norm 
$$\| u \|_E := \| u \|_{H^1(0, \tau; \Hi)} + \| u \|_{L^\infty(0, \tau; \V)}.$$
Let $f \in L^2(0, \tau; \Hi)$ and $u_0 \in \V$. For $v \in E$, there exists a unique solution $u \in E$ to the problem 
\begin{equation*}
\left\{
  \begin{array}{rcl}
     u'(t) + A(0)\,u(t) &=& f(t) - B(t) v(t), \ t \in (0, \tau] \\
     u(0)&=&u_0.
  \end{array}
\right.
\end{equation*}
In addition, there exists a constant $C$, independent of $u_0, f$ and $v$ such that 
\begin{equation}\label{eqV1}
\| u \|_E \le C \left[ \| u_0 \|_{\V} + \|f \|_{L^2(0, \tau; \Hi)} + \| B v \|_{L^2(0, \tau; \Hi)} \right].
\end{equation}
Bounding $ \| u \|_{H^1(0, \tau; \Hi)}$ by the term on the RHS follows from the classical maximal regularity for the operator $A(0)$ in the Hilbert space $\Hi$. For the  bound of $\| u \|_{L^\infty(0, \tau; \V)}$ by the same term we use  either Proposition \ref{YA1} or the classical embedding 
$$H^1(0, \tau; \Hi) \cap L^2(0, \tau; D(A(0))) \hookrightarrow C([0, \tau]; D(A(0)^{\frac{1}{2}}).$$
Define the operator $K$ on $E$ by $K(v) := u$. We prove that for $\tau > 0$ small, $K$ is a contraction operator. Indeed, let  $v_1, v_2 \in E$. Then $w := K(v_1) - K(v_2)$ satisfies
\begin{equation*}
\left\{
  \begin{array}{rcl}
     w'(t) + A(0)\,w(t) &=&  - B(t) (v_1(t) - v_2(t)), \ t \in (0, \tau] \\
     u(0)&=&0.
  \end{array}
\right.
\end{equation*}
Hence by \eqref{eqV1} and [H4] with $\gamma = 0$
\begin{eqnarray*}
 \| w \|_E  &\le&  C \| B (v_1 - v_2) \|_{L^2(0, \tau; \Hi)} \\
 &=& C ( \int_0^\tau \| (A(t) - A(0))(v_1 - v_2)(t) \|^2 \, dt )^{\frac{1}{2}}\\
 &\le& C M_0 \sqrt{\tau} \| v_1 - v_2 \|_{L^\infty(0, \tau; \V)}\\
 &\le& C M_0 \sqrt{\tau} \| v_1 - v_2 \|_E.
 \end{eqnarray*}
 This shows that for $C M_0 \sqrt{\tau} < 1$ the operator $K$ is a contraction. Hence there exists a unique $u \in E$ such that 
 $K(u) = u$. This gives that $u$ satisfies (P) on $[0, \tau]$ for $\tau < \frac{1}{(CM_0)^2}$ and it follows from \eqref{eqV1} that $u$ satisfies the apriori estimate
 $$ \| u \|_E \le C'  [ \| u_0 \|_{\V} + \|f \|_{L^2(0, \tau; \Hi)} ].$$
 Finally, for arbitrary $\tau > 0$, we split $[0, \tau]$ into a finite number of  sub-intervals with small sizes and proceed exactly as in the previous proof.  
 \end{proof}

\section{Applications}\label{sec5}
In this section we give some applications of the previous results to concrete differential operators. 

\medskip
\noindent{\it -- Elliptic operators on $\R^n$.}    Let  $\Hi = L^{2}(\R^n)$  and $\V= H^{1}(\R^n)$  and define the sesquilinear forms 
$$\fra(t,u,v)= \sum_{k,l=1}^n \int_{\R^n} c_{kl} (t,x) \partial_k u \overline{\partial_l v} \, dx, \ \, u, v \in \V.$$
We assume that  the  matrix $C(t,x)  = (c_{kl}(t,x))_{1 \le k,l\le n}$ satisfies the usual ellipticity condition, that is,
 there
exists $\alpha, M >0$ such that 
$$ \alpha |\xi|^{2} \leq \Re(C(t,x)\xi.\bar{\xi})\  \mbox{and} \ |C(t,x)\xi.\nu| \leq M |\xi| |\nu|$$
for all $\xi,\nu \in \mathbb{C}^{n}$  and a.e $t \in [0, \tau]$, $x \in \R^n$.
 The forms $\fra(t)$  satisfy the assumptions [H1]-[H3]. For each $t$, the corresponding operator is formally given by
 $A(t) = -\sum_{k,l=1}^n \partial_l (c_{kl}(t,x) \partial_k)$. 
Next we assume  that  $C \in H^{\frac{1}{2}}(0,\tau; L^{\infty}(\C^{n^2}))$. 
We note that  $$\|\mathcal{A}(t)-\mathcal{A}(s)\|_{\mathcal{L}(\V, \V')} \le M' \|C(t,.)- C(s,.) \|_{L^{\infty}(\C^{n^2})}$$ for some contant $M'$. 
This implies that 
$\mathcal{A} \in H^{\frac{1}{2}}(0,\tau;\mathcal{L}(\V,\V'))$. 
We assume in addition that each $c_{kl}$ is H\"older continuous of order $\alpha $ for some $\alpha > 0$ with
\[
| c_{kl}(t,x) - c_{kl}(s,x) | \le c |t-s|^\alpha
\]
for a.e. $x \in \R^n$. This assumption implies in particular \eqref{eqHyp}. We could also weaken this assumption by formulating it in terms of the 
modulus of continuity, see  \eqref{eqmod}.\\
We are now allowed to  apply Theorem \ref{thm1}.  We  obtain maximal $L^2$-regularity and apriori estimate for the parabolic problem
\begin{equation*}
 \left\{
\begin{array}{l}
u'(t)+A(t) u(t)=f(t) \\
u(0)=u_{0}   \in H^1(\R^n).
\end{array}
\right.
\end{equation*}
That is,  for every $f \in L^{2}(0,\tau;L^{2}(\R^n))$ and $ u_{0} \in H^{1}(\R^n)$ there is unique  
solution $u \in H^{1}(0,\tau;L^{2}(\R^n))$.  Note that we also have from Proposition \ref{YA1} that  $u \in L^\infty(0, \tau ; H^1(\R^n))$. As we already mentioned before, the uniform Kato square root property required in Theorem \ref{thm1} is satisfied in this setting, see \cite{AHLMT}. 
As we mentioned in the introduction, maximal $L^2$-regularity for these elliptic operators was proved recently in \cite{AE} under the slightly stronger assumption that the coefficients satisfy  a BMO-$H^{\frac{1}{2}}$ regularity with respect to $t$. \\
The maximal $L^2$-regularity we proved here holds also in the case of elliptic operators on Lipschitz domains with Dirichlet or Neumann boundary conditions. The arguments are the same. One define the previous forms $\fra(t)$ with domain $\V = H^1_0(\Omega)$ (for Dirichlet boundary conditions) or $\V = H^1(\Omega)$ (for Neumann boundary conditions). 

\medskip
\noindent{-- \it Robin boundary conditions.} Let $\Omega$ be a bounded domain of $\R^d$ with Lipschitz boundary $\partial \Omega$. We denote by ${\rm Tr}$ the classical trace operator. Let  
$\beta: [0, \tau] \times \partial \Omega \to [0, \infty)$  be bounded and such that 
\begin{equation}\label{app2}
 \int_0^\tau \int_0^\tau \frac{\| \beta(t,.) - \beta(s,.) \|_{L^\infty(\partial \Omega)}^2}{| t-s|^{ 1 + 2\alpha} }\, ds dt < \infty
 \end{equation}
for some $\alpha> \frac{1}{4}$. In particular, $\beta (., x) \in H^\alpha$. We define the forms
$$\fra(t,u,v) := \int_{\Omega} \nabla u . \nabla v \, dx  + \int_{\partial \Omega} \beta(t,.) {\rm Tr} (u) {\rm Tr}(v)\, d \sigma,$$
 for all $ u, v \in \V := H^1(\Omega).$
Formally, the associated operator $A(t)$ is  (minus) the Laplacian with the  time dependent Robin boundary condition  
$$\tfrac{\partial u}{\partial n} + \beta(t,.) u = 0 \ {\rm on }\  \partial \Omega.$$
Here $\tfrac{\partial u}{\partial n}$ denotes the normal derivative in the weak sense.\\
Note that for any $\varepsilon > 0$ 
\begin{align*}
& | \fra(t;u,v) -\fra(s;u,v) | \\
&= | \int_{\partial \Omega} [\beta(t,.) - \beta(s,.)]  {\rm Tr}(u) {\rm Tr}(v)\, d \sigma | \\
&\le  \| \beta(t,.) - \beta(s,.) \|_{L^\infty(\partial \Omega)}  \| u \|_{H^{\frac{1}{2} + \varepsilon}(\Omega)} \| v\|_{H^{\frac{1}{2} + \varepsilon}(\Omega)},
\end{align*}
where we used the fat that the trace operator is bounded from $H^{\frac{1}{2} + \varepsilon}(\Omega)$ into $L_2(\partial \Omega)$.  
Now assumption \eqref{app2} allows us to apply Theorem \ref{thm2}  with $\gamma = \frac{1}{2} + \varepsilon$ and obtain maximal $L^2$-regularity for the corresponding evolution equation with initial data $u_0 \in H^1(\Omega)$. The forms considered here are symmetric and therefore the uniform Kato square root property can be checked easily. \\
Maximal $L^2$-regularity for time dependent Robin boundary condition with $\beta (., x) \in C^\alpha$ for some $\alpha > \frac{1}{4}$ was previously proved in \cite{AM} and \cite{Ou15}. In  \cite{Ou15}  maximal $L^p$-regularity is proved for all $p \in (1, \infty)$ is proved.

\medskip
\noindent{-- \it Operators with terms of lower order.} Let $\Omega$ be a domain of $\R^n$ and let  $b_k, m :[0, \tau] \times \Omega \to \R$ be a bounded measurable function for each $k= 1, \cdots, n$. We define the forms
$$\fra(t,u,v) = \int_\Omega \nabla u . \nabla v dx + \sum_{k=1}^n \int_\Omega b_k(t,x) \partial_k u v dx + \int_\Omega m(t,x) u v dx,$$
with domain $\V$, a closed subset of $H^1(\Omega)$ which contains $H^1_0(\Omega)$. 
It is clear that 
$$| \fra(t,u,v) - \fra(s,u,v) | \le M_0 \| u \|_{\V} \| v\|_2$$
for some constant $M_0$. 
This means that assumption [H4] is satisfied with $\gamma = 0$. We apply Proposition \ref{prop3} and obtain maximal $L^2$-regularity for the correspond evolution equation.\\
 As we mentioned in the introduction, the domains of the corresponding operators are independent of $t$ and one may apply the results from \cite{ACFP} to obtain maximal regularity. Doing so, one needs  to assume some regularity  with respect to $t$ for the coefficients $b_k(t,x)$ and $m(t,x)$ whereas the result we obtain from Proposition \ref{prop3} does not require any regularity. \\
 
 \noindent{\bf Acknowledgements.} The authors wish to thank Stephan Fackler for a useful discussion on the content  of this paper.

\end{document}